\documentclass{article}
\usepackage[UKenglish]{babel}
\usepackage{authblk}
\usepackage[colorlinks=true]{hyperref}
\hypersetup{citecolor=blue}
\usepackage{color}
\usepackage{amsfonts,amsthm,amssymb}
\usepackage{mathtools}
\usepackage{enumitem}
\usepackage{dsfont}
\usepackage{esint}

\usepackage[capitalise]{cleveref}
\usepackage{float}
\usepackage{epsfig}
\usepackage{graphicx}
\usepackage{caption}
\usepackage{subcaption}
\usepackage{tikz}
\newtheorem{remark}{Remark}
\newtheorem{thm}{Theorem}[section]
\newtheorem{lem}[thm]{Lemma}
\newtheorem{prop}[thm]{Proposition}
\newtheorem{cor}[thm]{Corollary}

\numberwithin{equation}{section}
\numberwithin{thm}{section}
%\numberwithin{problem}{section}
%\setcounter{tocdepth}{3}
%
\newcommand{\beq}{\begin{equation}}
\newcommand{\eeq}{\end{equation}}
\newcommand {\f}{\frac}
\newcommand {\pa}{\partial}
\newcommand {\e}{\varepsilon}

\newcommand {\init}{\text{in}}
%Characteristic function
 % Minor
%
\newcommand{\R}{\mathbb{R}}

\newcommand\N{{\mathbb N}}

%

%

 % Minor sum
%

%

%

%
\newcommand\MScN[1]{\href{http://www.ams.org/mathscinet-getitem?mr=#1}{\nolinkurl{(#1)}}}
\newcommand\DOI[1]{\href{http://dx.doi.org/#1}{(doi: \nolinkurl{#1})}}
\newcommand\LINK[1]{\href{#1}{(link: \nolinkurl{#1})}}
\title {Study of a class of triangular starvation driven
  cross-diffusion systems}

\author{E. Brocchieri$^1$, L. Desvillettes$^2$, H. Dietert$^3$}

\date{\today}
\providecommand{\keywords}[1]{\small\textit{{Keywords.}} #1\\}
\providecommand{\subjclass}[1]{\small\textit{{2020 Mathematics Subject Classification.}} #1}
\begin{document}
\maketitle
\begin{abstract}
  We study the existence, regularity and uniqueness for a general
  class of triangular reaction-cross-diffusion systems coming from
  the study of \textit{starvation driven} behavior for two species in
  competition. This study involves an equivalent system
  in \textit{non-divergence} form, for which existence can be obtained
  thanks to Schauder's fixed point theorem.
\end{abstract}
\keywords{Cross-diffusion, starvation-driven models, existence, uniqueness}
\subjclass{Primary 35K57, Secondary 92D25}

\section{Introduction}
This paper is devoted to the analysis of a class of triangular
cross-diffusion systems with unknowns $u=u(t,x)\ge 0$ and
$v=v(t,x) \ge0$, representing the densities of two populations. We
assume that the species represented by $v$ diffuses with a given
constant rate, while the species represented by $u$ diffuses with a
rate $B(u,v)$, which depends on both $u$ and $v$. In other terms, the
equation satisfied by $u$ involves a cross-diffusion term. The system
is said to be triangular as no cross-diffusion term appears in the
equation satisfied by $v$. Moreover, we include reaction terms
modeling the competition between the two species.  \medskip

We consider the evolution on a smooth ($C^{\infty}$) bounded domain
\(\Omega \subset \R^N\), \(N \ge 1\), over a time \(T\). Denoting
\(\Omega_T \coloneqq (0,T) \times \Omega\), the system writes
as
\begin{equation}\label{macro step 1}
  \begin{cases}
    \partial_t u=\Delta \big(u\,B(u,v)\big)+uf(u, v),\qquad &\text{in }\Omega_T,\\
    \partial_t v=d_v\Delta v+vg(u, v),&\text{in }\Omega_T.
  \end{cases}
\end{equation}
It is endowed with zero flux (homogeneous Neumann) boundary
conditions
\begin{equation}\label{boundary cond macro step 1}
  \nabla \big(u \,B(u,v)\big)\cdot\sigma=\nabla v\cdot\sigma=0,\qquad\text{on }\,(0,T)\times\partial\Omega,
\end{equation}
and with nonnegative initial data
\begin{equation}\label{initial data step1}
  u(0,x)=u_\init(x)\ge0,\qquad
  v(0,x)=v_\init(x)\ge0,\qquad x\in\,\Omega.
\end{equation}
In the sequel, we will consider a diffusion coefficient $d_v$ and
functions $B,f,g$ which satisfy the following assumptions.  \medskip

\noindent\textbf{Assumption A}. The diffusion rate associated to the
species $v$ is strictly positive, i.e.
\begin{equation*}
  d_v > 0.
\end{equation*}
The functions $f,g$ are $C^1(\R_+ \times \R_+)$ and satisfy conditions
which are typical of Lotka-Volterra type reaction terms for competing
species, i.e. there exist constants $C_f$, $C_f'$, $C_g$, $C_g'>0$
such that for all $u,v\ge0$ (and denoting by $\pa_1,\pa_2$ the
derivatives with respect to the first and second variable)
\begin{equation}\tag{R1}\label{hp R1}
  \begin{split}
    -C_f(1+u+v)\le f(u,v)&\le C_f,\\
    -C_g(1+u+v)\le g(u,v)&\le C_g,\\
    |\partial_1 f(u,v)|,|\partial_2 f(u,v)|&\le C'_f,\\
    |\partial_1 g(u,v)|,|\partial_2 g(u,v)|&\le C'_g.
  \end{split}
\end{equation}
We denote
\begin{equation}
  \label{eq:definition-a}
  A(u,v) \coloneqq u\, B(u,v),
\end{equation}
and assume that
\begin{equation}\tag{D1}\label{hp D1}
  B\in C^1(\R_+ \times \R_+,\R_+).
\end{equation}
We also suppose that there exist $a_0, a_1, a_3>0$ such that for all $ u,v\ge0,$
\begin{equation}\label{def B}
  0 < a_0 \le B(u,v) \le a_1,
\end{equation}
and
\begin{equation}\label{d2 B in Linf}
  \text{$|\partial_2 B(u,v)|\le a_3.$}
\end{equation}
We assume moreover that there exists $a_2>0$ such that for all $ u,v\ge 0,$
\begin{equation}\tag{D2}\label{hp D2}
  0<a_0\le\partial_1A(u,v)\le a_1\qquad\text{ and }\qquad|\partial_2A(u,v)|\le a_2.
\end{equation}\medskip

The motivation for studying such systems comes from the modeling of
the effect of starvation on the movement of individuals belonging to
species in competition. They are sometimes called {\it{starvation
    driven}} cross-diffusion systems. In \cite[Chapter~3]{Brocchieri},
a whole class of {\it{starvation driven}} cross-diffusion systems is
obtained from a microscopic description of the interaction between
individuals. The obtained systems are
\begin{equation*}
  \left\{
    \begin{lgathered}
      \partial_tu-\Delta(d_au^*_a(u,v)+d_bu^*_b(u,v))= f_a(u_a^*(u,v), u^*_b(u,v),v)+f_b(u^*_a(u,v),u^*_b(u,v),v),\\
      \partial_tv- d_v\Delta v =f_v(u^*_a(u,v),u^*_b(u,v),v),
    \end{lgathered}
  \right.
\end{equation*}
over \((0,+\infty) \times \Omega\), with diffusion coefficients
$d_a,d_b,d_v>0$, $d_a\neq d_b$, and  reaction terms
$f_a, f_b, f_v$ of Lotka-Volterra type (for competing species), where
$(u^*_a(u, v), u^*_b(u, v))$ are defined as the unique solution
to the nonlinear system
\begin{equation*}%\label{NLS}
  \begin{cases}
    u=u^*_a(u,v) + u^*_b(u,v),\\
    \phi(bu_b^*(u,v)+dv)\, u_b^*(u,v)
    -\psi(au_a^*(u,v)+cv)\, u_a^*(u,v)=0,
  \end{cases}
\end{equation*}
where $\phi$ and \(\psi\) are suitable conversion rates, and
$a,b,c,d>0$ are parameters, see \cite[Chapter~3]{Brocchieri} for more
details.  Those systems belong to the class of systems studied in this
paper by setting
\begin{gather*}
  A(u,v) := d_au^*_a(u,v)+d_bu^*_b(u,v), \qquad  vg(u, v) : = f_v(u^*_a(u,v),u^*_b(u,v),v), \\
  u\,f(u,v) := f_a(u_a^*(u,v), u^*_b(u,v),v)+f_b(u^*_a(u,v),u^*_b(u,v),v).
\end{gather*}
\medskip

Our main theorem shows the existence of strong solutions to  system
\eqref{macro step 1}--\eqref{initial data step1} under
Assumption~A.

\begin{thm}\label{thm existence (u,v) + regularity}
  Consider a smooth bounded domain \(\Omega \subset \R^N\) for
  \(N \ge 1\) and suppose that $d_v, B, f,g$ satisfy Assumption~A.
  Consider also nonnegative initial data
  $u_\init\in \big(L^{\infty}\cap H^1\big) (\Omega),$
  $v_\init\in\big(L^{\infty}\cap W^{2,p}(\Omega) \cap
  H^3\big)(\Omega)$ for all $p \in [1, +\infty)$, compatible with the
  Neumann boundary condition \eqref{boundary cond macro step 1}.

  Then, there exists a strong nonnegative global solution $(u,v)$ to system \eqref{macro step 1}--\eqref{initial data step1}, in the sense that
  \begin{enumerate}[label={\roman*)}]
  \item each term in the two equations of \eqref{macro step 1} is locally integrable and the
    equations are satisfied a.e. in $\Omega_T$,
  \item the boundary and initial conditions \eqref{boundary cond
      macro step 1}, \eqref{initial data step1} hold in the
    sense of traces.
  \end{enumerate}
  Moreover, for all $T>0$ and for $i,j=1,\dots, N$, it holds for all $p\in [1,+\infty)$
  \begin{enumerate}[label={\roman*)}]
  \item $u\in L^{\infty}\big([0,T]; L^p(\Omega)\big),$
    $\partial_{x_i} u\in L^{\infty}\big([0,T]; L^2(\Omega)\big),$ $\partial_tu,\partial^2_{x_ix_j} A(u,v)\in L^2(\Omega_T),$
  \item $v\in L^{\infty}(\Omega_T),$
    $\partial_tv, \partial^2_{x_ix_j} v\in L^p(\Omega_T),$
    $ \partial^2_{tt} v, \partial^3_{tx_ix_j} v  \in L^2(\Omega_T)$.
  \end{enumerate}
  Finally, if $N\le 3$, it holds for  $i,j=1,\dots ,N$,
  \begin{equation*}
    u\in L^{\infty}(\Omega_T),\qquad
    \partial_{x_i} u \in L^4(\Omega_T),\qquad \partial_t v, \pa^2_{x_ix_j} v  \in L^{2}([0,T]; L^{\infty}(\Omega)),
  \end{equation*}
  and if $N=1$ and $u_{in} \in  W^{1,p}(\Omega)$ for all $p\in [1,+\infty)$, it holds
  \begin{equation*}
    \partial_{x} u \in L^{p}(\Omega_T).
  \end{equation*}
\end{thm}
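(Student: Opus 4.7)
The proof proceeds via Schauder's fixed point theorem applied to a linearization in which the $u$-equation is rewritten in non-divergence form. The crucial structural fact is \eqref{hp D2}: for each fixed $v \ge 0$, the map $u \mapsto A(u,v)$ is a bi-Lipschitz bijection of $\R_+$ onto itself with derivative in $[a_0, a_1]$, and admits a globally Lipschitz inverse $\Phi(\cdot, v)$. Setting $w := A(u,v)$ and differentiating in time,
\begin{equation*}
\partial_t w = \partial_1 A(u,v)\,\partial_t u + \partial_2 A(u,v)\,\partial_t v = \partial_1 A(u,v)\,\Delta w + \partial_1 A(u,v)\,u\,f(u,v) + \partial_2 A(u,v)\,\partial_t v,
\end{equation*}
which is a uniformly parabolic equation in non-divergence form whose principal coefficient lies in $[a_0, a_1]$.

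Given $(\bar u, \bar v)$ in a bounded closed convex set $\mathcal C \subset (L^2(\Omega_T))^2$ of nonnegative functions with suitable $L^p$ bounds, the fixed-point map is built by two successive linear solves. First, solve
\begin{equation*}
\partial_t v - d_v \Delta v = v\, g(\bar u, \bar v), \qquad v(0,\cdot) = v_\init,
\end{equation*}
with Neumann boundary condition. By \eqref{hp R1} the coefficient $g(\bar u, \bar v)$ is bounded, and classical linear parabolic theory provides a unique nonnegative $v \in L^\infty(\Omega_T)$ with $\partial_t v, \partial^2_{x_i x_j} v \in L^p(\Omega_T)$ for all $p < \infty$, using the assumed $W^{2,p} \cap H^3$ regularity of $v_\init$. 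Second, freeze the coefficients $a := \partial_1 A(\bar u, \bar v)$ and $b := \partial_2 A(\bar u, \bar v)$ and solve the linear non-divergence form equation
\begin{equation*}
\partial_t w - a\,\Delta w = a\,\bar u\, f(\bar u, \bar v) + b\,\partial_t v, \qquad w(0,\cdot) = A(u_\init, v_\init),
\end{equation*}
with Neumann condition. Since $a \in [a_0, a_1]$ is measurable and uniformly elliptic, $L^p$-maximum regularity for non-divergence form parabolic equations yields a strong solution $w$ with $\partial_t w,\,\partial^2_{x_i x_j} w \in L^p(\Omega_T)$, and setting $u := \Phi(w, v) \ge 0$ produces a candidate fixed point $(u, v)$.

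To invoke Schauder, a priori estimates must render $\mathcal C$ invariant, compact and continuously mapped. The invariance rests on an energy estimate derived from the divergence form: multiplying $\partial_t u = \Delta A(u,v) + uf$ by $A(u,v)$ and integrating, using the antiderivative $F(u,v) := \int_0^u A(s,v)\,ds$ and controlling $\partial_v F$ via \eqref{d2 B in Linf}, gives control of $\nabla A$ in $L^2(\Omega_T)$ and hence of $\nabla u$ in $L^2(\Omega_T)$ by \eqref{hp D2}; testing against $u^{p-1}$ and using $uf \le C_f u$ from \eqref{hp R1} yields $u \in L^\infty(0,T; L^p(\Omega))$ for every $p < \infty$. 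Nonnegativity of $u$ follows from the maximum principle for $w$, using $A(0,v) = 0$ and the sign structure of the source. Compactness of the Schauder map comes from Aubin-Lions applied with the bounds $\partial_t w, \nabla^2 w \in L^2(\Omega_T)$, and continuity in the $L^2$-topology follows by applying the same linear parabolic estimates to differences, exploiting that $B, f, g$ and consequently $A, \partial_1 A, \partial_2 A$ are locally Lipschitz by \eqref{hp D1}, \eqref{hp D2} and \eqref{hp R1}.

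The main obstacle will be reconciling the two formulations: the energy estimate that provides compactness lives naturally on the divergence side, while Schauder requires the non-divergence formulation to construct the map, and the two-sided bound $a_0 \le \partial_1 A \le a_1$ of \eqref{hp D2} is precisely what allows one to pass freely from $\nabla A$ estimates to $\nabla u$ estimates. Once the fixed point is obtained, the higher regularity in the statement is reached by bootstrap: in $N \le 3$ the embedding $H^2 \hookrightarrow L^\infty$ applied to $A$ (whose Laplacian is controlled in $L^2$) gives $A \in L^\infty(\Omega_T)$ and thus $u \in L^\infty(\Omega_T)$ by Lipschitz inversion, which in turn upgrades the $v$-equation to $\partial_t v, \partial^2_{x_i x_j} v \in L^2(0,T; L^\infty(\Omega))$; the $L^4(\Omega_T)$ bound on $\nabla u$ for $N \le 3$ and the $L^p(\Omega_T)$ bound for $N=1$ are obtained by combining the second-order estimate on $A$ with the appropriate Sobolev embeddings.
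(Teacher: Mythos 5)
Your overall architecture (non-divergence reformulation via $a=A(u,v)$, a two-stage linear solve inside a Schauder fixed point, energy and $L^p$ estimates on the divergence side, and a bootstrap for $N\le 3$) matches the paper's strategy, but two steps as written would fail.

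First, the invariance of the convex set $\mathcal{C}$ cannot be obtained from the energy identities you invoke. Multiplying by $A(u,v)$ or testing against $u^{p-1}$ exploits the divergence structure $\partial_t u=\Delta A(u,v)+uf(u,v)$, which holds only for solutions of the \emph{nonlinear} system; the output $u=\Phi(w,v)$ of your frozen-coefficient linear solve satisfies $\partial_t w-\partial_1 A(\bar u,\bar v)\Delta w=\dots$ with coefficients evaluated at the \emph{input} $(\bar u,\bar v)$, so the divergence structure is lost and those multiplier computations do not close for the output. Moreover, the source $\bar u f(\bar u,\bar v)$ is quadratic from below by \eqref{hp R1}, so its $L^2$ norm is controlled only by higher $L^p$ norms of the input, and $\mathcal{C}$ is not mapped into itself without further work. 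The paper resolves exactly this by first truncating $f$ and $g$ at level $M$ and mollifying $g$ in $(t,x)$ (so that $\partial_t v$ and the whole source $s_M$ are bounded by a constant depending on $\e,M$ but not on the input), proving existence for the truncated system, and only afterwards deriving $\e,M$-uniform a priori estimates on genuine solutions (\textit{Lemma~\ref{lll}}) before removing the truncation. Some version of this two-tier scheme is needed.

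Second, your appeal to ``$L^p$-maximal regularity for non-divergence form parabolic equations'' with a merely measurable uniformly elliptic coefficient $a=\partial_1 A(\bar u,\bar v)$ is not available: for $\partial_t w-a(t,x)\Delta w=h$ with $a$ only in $L^\infty$, $W^{2,p}$ regularity in space fails in general. What does work --- and is what the paper uses in \textit{Proposition~\ref{prop 1 eq in b}} --- is the $L^2$-level estimate obtained by multiplying by $\Delta w$, which only needs $a_0\le a\le a_1$; but even the existence of a strong solution to the linear problem requires some spatial regularity of the coefficient, which the paper manufactures through the additional mollification $\mu(a_\delta,v_\delta)\ast_x\psi_\delta$ and a subsequent limit $\delta\to 0$. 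Finally, the estimates $\partial^2_{tt}v,\ \partial^3_{tx_ix_j}v\in L^2(\Omega_T)$ in the statement (which is where $v_\init\in H^3(\Omega)$ enters) are not addressed in your proposal; they require differentiating the $v$-equation in time and applying maximal regularity to $\partial_t v$.
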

\textit{Theorem \ref{thm existence (u,v) + regularity}} is proved in
\textit{Section \ref{sec:existence}}. The main difficulty in
constructing strong solutions to system \eqref{macro step
  1}--\eqref{initial data step1} is the presence of the nonlinear
cross-diffusion term in the equation satisfied by $u$. A key estimate
is formally obtained by using suitable multiplicators when considering
the evolution of \(u\) in \eqref{macro step 1}.  It leads to the
estimates:
$\partial_{x_i} u\in L^{\infty}\big([0,T]; L^2(\Omega)\big),$
$\partial_tu,\partial^2_{x_ix_j} A(u,v)\in L^2(\Omega_T),$ for
$i,j=1,\dots ,N$.  We present in this paper an original way of finding
a suitable regularization preserving these estimates (see
\textit{Subsection \ref{subsec:intro reg syst}}). This is achieved by
first proving the existence of solutions to a regularized system in a
\textit{non-divergence} form, using a convenient change of variable
(see \textit{Subsections \ref{subsec:exist reg syst nondiv},
  \ref{subsec: exist reg syst div}}). Then, we show uniform (with
respect to the regularization parameter) a priori bounds, which allow
to pass to the limit when the regularization parameter tends to $0$
(see \textit{Subsection \ref{subsec: end proof main thm}}).

\medskip
We conclude this introduction with the statement of two
stability/uniqueness results. If the space dimension is $N \le 2$, the
regularity of the strong solutions obtained in \textit{Theorem
  \ref{thm existence (u,v) + regularity}}, is sufficient to prove
stability in a strong norm (see \textit{Theorem \ref{thm uniqueness}})
under the following slightly stricter assumption.  \smallskip

\noindent\textbf{Assumption B}. The diffusivity function $A$
is such that
\begin{equation*}
  \partial_{ij}^2 A \text{ is bounded for  } i,j =1,2 .
\end{equation*}

This regularity is also sufficient to ensure stability in a weak norm if $N \le 3$, without the extra Assumption~B. More precisely, we prove in \textit{Theorem \ref{nuq}} a stability result in $(H^1)'(\Omega)$, denoted as the dual space of $H^1(\Omega)$,  with the norm
\begin{equation}\label{norm dual H1}
  \|w\|_{(H^1)'(\Omega)}^2 := (w)_{\Omega}^2+\|\nabla \phi \|_{L^2(\Omega)}^2, \qquad w\in L^2(\Omega),
\end{equation}
where $(w)_{\Omega}$ is the average value of $w$ in $\Omega$ and $\phi$
is the unique solution of the Neumann problem
\begin{equation*}
  - \Delta \phi = w -   (w)_{\Omega}, \quad {\hbox{in}}\,\Omega,\qquad\quad \nabla \phi \cdot \sigma = 0 \quad {\hbox{on}}\,\pa\Omega, \qquad\quad (\phi)_{\Omega} =0.
\end{equation*}
\smallskip

\begin{thm}\label{thm uniqueness}
  Let $N\le2$ and $\Omega$ be a smooth bounded open set of $\R^N$. We
  suppose that Assumptions~A and B hold, and we take two solutions
  $(u_i,v_i),\,i=1,2$ of system \eqref{macro step 1}, \eqref{boundary
    cond macro step 1}, given by \textit{Theorem~\ref{thm existence
      (u,v) + regularity}}, corresponding to the nonnegative initial
  data $(u_{i,\,\init},v_{i,\,\init}),\,i=1,2$ with
  $u_{i,\init}\in (L^{\infty} \cap H^1)(\Omega)$,
  $v_{i,\init}\in H^3(\Omega).$

  Then, there exists a constant $C_{stab}>0$, depending on
  $ \Omega,$ $T,$ $a_0,$ $a_1,$ $a_2,$ $a_3,$ $d_v,$ $C_f,$ $C_f'$, $C_g,$ $C_g'$,
  $\|\pa^2_{ij} A\|_{\infty}$,
 and on $\|u_2\|_{L^{\infty}(\Omega_T)}$,
  $\|\nabla u_2\|_{L^{4}(\Omega_T)}$,
  $\|v_2\|_{L^{\infty}(\Omega_T)}$,
  $\|\nabla v_2\|_{L^2([0,T]; L^{\infty}(\Omega))}$, such that
  \begin{multline}\label{uniqueness-ineq-prop}
    \| u_1 - u_2\|_{L^\infty([0,T]; L^2(\Omega))\,\cap \,L^2([0,T]; H^1(\Omega))}^2 + \|v_1 - v_2\|_{L^\infty([0,T]; L^2(\Omega))\,\cap\, L^2([0,T]; H^1(\Omega))}^2\\
    \le C_{stab}\big(\|u_{1,\init}-u_{2,\init}\|^2_{L^2(\Omega)}+\|v_{1,\init}-v_{2,\init}\|^2_{L^2(\Omega)}\big).
  \end{multline}
  Finally, if $u_{1,\init}=u_{2,\init}$ and $v_{1,\init}=v_{2,\init}\,$ for a.e. $x\in \Omega,$ then
  \[
    u_1(t,x)=u_2(t,x)\qquad\text{ and }\qquad v_1(t,x)=v_2(t,x),\quad\text{ a.e. $(t,x)\in \Omega_T$},
  \]
  so that uniqueness holds for system \eqref{macro step 1}, \eqref{boundary cond macro step 1}.
\end{thm}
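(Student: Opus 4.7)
The plan is to perform an $L^2$-energy estimate on the differences $U:=u_1-u_2$ and $V:=v_1-v_2$. I would start by linearising the diffusivity along the segment joining $(u_2,v_2)$ to $(u_1,v_1)$: setting
\[
\alpha(t,x) := \int_0^1 \partial_1 A\big(u_2+\theta U,\,v_2+\theta V\big)\,d\theta,\qquad
\beta(t,x) := \int_0^1 \partial_2 A\big(u_2+\theta U,\,v_2+\theta V\big)\,d\theta,
\]
one has $A(u_1,v_1)-A(u_2,v_2)=\alpha\,U+\beta\,V$, with $a_0\le \alpha\le a_1$, $|\beta|\le a_2$ by \eqref{hp D2}, and Assumption~B then yields the pointwise bound $|\nabla\alpha|+|\nabla\beta|\le C\,(|\nabla u_2|+|\nabla v_2|+|\nabla U|+|\nabla V|)$. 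Subtracting the two copies of \eqref{macro step 1}, the pair $(U,V)$ solves
\[
\partial_t U = \Delta(\alpha U+\beta V)+R_u,\qquad \partial_t V = d_v\Delta V+R_v,
\]
with homogeneous Neumann conditions, where the Lipschitz part of \eqref{hp R1} together with the $L^\infty(\Omega_T)$-bounds on $u_i,v_i$ delivered by \emph{Theorem~\ref{thm existence (u,v) + regularity}} in dimension $N\le 2$ give $|R_u|+|R_v|\le C(|U|+|V|)$ up to absorbable factors.

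Next I would test the first equation by $U$ and the second by $V$, integrate over $\Omega$, and exploit the boundary conditions. The $U$-diffusion term produces
\[
\int_\Omega U\,\Delta(\alpha U+\beta V)\,dx = -\int_\Omega \alpha|\nabla U|^2 - \int_\Omega U\nabla U\cdot\nabla\alpha - \int_\Omega \beta\nabla U\cdot\nabla V - \int_\Omega V\nabla U\cdot\nabla\beta,
\]
giving the coercive $-a_0\|\nabla U\|_{L^2}^2$. The pieces of $\nabla\alpha,\nabla\beta$ carrying $\nabla u_2$ or $\nabla v_2$ are controlled by Hölder and Young inequalities, using $\nabla u_2\in L^4(\Omega_T)$, $\nabla v_2\in L^2([0,T];L^\infty(\Omega))$, and the 2D Ladyzhenskaya inequality $\|w\|_{L^4}^2\le C\|w\|_{L^2}\|w\|_{H^1}$; a typical estimate reads
\[
\int_\Omega |U|\,|\nabla U|\,|\nabla u_2|\,dx \le \varepsilon\,\|\nabla U\|_{L^2}^2 + C(\varepsilon)\,\|U\|_{L^2}^2\,\|\nabla u_2\|_{L^4}^4,
\]
where $\|\nabla u_2\|_{L^4}^4\in L^1(0,T)$ by the regularity from \emph{Theorem~\ref{thm existence (u,v) + regularity}}. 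The $V$-equation is similar and simpler, producing $-d_v\|\nabla V\|_{L^2}^2$ together with terms controlled using $v_i\in L^\infty$ and \eqref{hp R1}.

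The main obstacle will be the genuinely cubic terms coming from the parts of $\nabla\alpha,\nabla\beta$ proportional to $\nabla U,\nabla V$, which generate integrals
\[
\int_\Omega |U|\,|\nabla U|^2\,dx,\qquad \int_\Omega |U|\,|\nabla U|\,|\nabla V|\,dx,\qquad \int_\Omega |V|\,|\nabla U|\,|\nabla V|\,dx.
\]
These I would handle by invoking the $L^\infty(\Omega_T)$-bound on $u_i,v_i$ from \emph{Theorem~\ref{thm existence (u,v) + regularity}}, available for $N\le 2$ (indeed $N\le 3$), so that $|U|,|V|$ are uniformly bounded and Young's inequality absorbs the remaining gradient squares into the coercive quantities $\tfrac{a_0}{2}\|\nabla U\|_{L^2}^2+\tfrac{d_v}{2}\|\nabla V\|_{L^2}^2$. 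The restriction $N\le 2$ is essential here: the Gagliardo-Nirenberg scaling underlying the $L^4$ bound above degrades for $N=3$, forcing $\nabla u_2\in L^8_tL^4_x$ in place of the available $L^4_tL^4_x$.

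Collecting all contributions, with $X(t):=\|U(t)\|_{L^2}^2+\|V(t)\|_{L^2}^2$, one arrives at
\[
\frac{d}{dt}X(t)+\tfrac{a_0}{2}\|\nabla U\|_{L^2}^2+\tfrac{d_v}{2}\|\nabla V\|_{L^2}^2 \le \Phi(t)\,X(t),
\]
where $\Phi\in L^1(0,T)$ with $L^1$-norm depending only on the quantities listed in the statement. Grönwall's lemma and integration of the coercive terms over $[0,T]$ yield \eqref{uniqueness-ineq-prop}. Uniqueness is then immediate, as setting $u_{1,\init}=u_{2,\init}$, $v_{1,\init}=v_{2,\init}$ gives $X(0)=0$ and the right-hand side vanishes, forcing $u_1=u_2$, $v_1=v_2$ almost everywhere in $\Omega_T$.
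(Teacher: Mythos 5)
Your overall strategy (energy estimate on $U=u_1-u_2$, $V=v_1-v_2$, Gagliardo--Nirenberg in dimension $2$ to exploit $\nabla u_2\in L^4(\Omega_T)$ and $\nabla v_2\in L^2([0,T];L^\infty(\Omega))$, then Gronwall) matches the paper's, but the way you linearise the diffusion term creates a gap that the paper's decomposition is specifically designed to avoid. Writing $A_1-A_2=\alpha U+\beta V$ and then differentiating $\alpha,\beta$ puts $\nabla U$ and $\nabla V$ inside $\nabla\alpha,\nabla\beta$, which is exactly what generates your ``genuinely cubic'' terms such as $\int_\Omega|U|\,|\nabla U|^2\,dx$. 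Your proposed treatment of these terms does not work: bounding $|U|$ by $\|u_1\|_{L^\infty}+\|u_2\|_{L^\infty}$ gives
\[
\mathcal{N}_H\int_\Omega|U|\,|\nabla U|^2\,dx\le \mathcal{N}_H\bigl(\|u_1\|_{L^\infty}+\|u_2\|_{L^\infty}\bigr)\,\|\nabla U\|_{L^2(\Omega)}^2,
\]
which is already exactly quadratic in $\nabla U$ with an $O(1)$ coefficient that has no reason to be smaller than $a_0/2$; Young's inequality cannot shrink a term that is homogeneous of degree two in the gradient, and no interpolation extracts a factor $\|U\|_{L^2}^2$ from it. (Integrating by parts once more, $\int U\nabla U\cdot\nabla\alpha=-\tfrac12\int U^2\Delta\alpha$, only makes matters worse, as it requires third derivatives of $A$ and second derivatives of the solutions.) The paper sidesteps this entirely by expanding
$\nabla(A_1-A_2)=\partial_1A_1\nabla U+(\partial_1A_1-\partial_1A_2)\nabla u_2+\partial_2A_1\nabla V+(\partial_2A_1-\partial_2A_2)\nabla v_2$,
i.e.\ applying the chain rule to each $A_i$ \emph{before} taking differences: Assumption~B is then used only on the zeroth-order factors $\partial_kA_1-\partial_kA_2$, bounded by $\mathcal{N}_H(|U|+|V|)$, so every problematic term carries at most one gradient of a difference, and the Gagliardo--Nirenberg step you describe closes the estimate. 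You should redo the diffusion estimate with that decomposition.

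A second, more minor omission: testing the $v$-equation by $V$ alone does not suffice. The cross term $\int_\Omega\partial_2A_1\,\nabla V\cdot\nabla U\,dx$ costs, after Young's inequality, $\tfrac{a_0}{4}\|\nabla U\|_{L^2}^2+\tfrac{a_2^2}{a_0}\|\nabla V\|_{L^2}^2$, and the second contribution exceeds the available coercivity $d_v\|\nabla V\|_{L^2}^2$ unless $d_v>a_2^2/a_0$, which is not assumed. The paper multiplies the $v$-equation by $\lambda V$ and chooses $\lambda$ larger than $a_2^2/(a_0d_v)+a_0/(4d_v)$ so that this term is absorbed; you need the same weighting, which is harmless everywhere else in the argument.
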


\begin{thm}\label{nuq}
  Let $N\le 3$ and $\Omega$ be a smooth bounded open set of $\R^N$. We
  suppose that Assumption~A holds, and we take two solutions
  $(u_i,v_i),\,i=1,2$ of system \eqref{macro step 1}, \eqref{boundary
    cond macro step 1}, given by \textit{Theorem~\ref{thm existence
      (u,v) + regularity}}, corresponding to the nonnegative initial
  data $(u_{i,\,\init},v_{i,\,\init}),\,i=1,2$ with
  $u_{i,\init}\in (L^{\infty} \cap H^1)(\Omega)$,
  $v_{i,\init}\in \left(W^{2,p} \cap H^3\right)(\Omega)$ for all
  $p\in [1,+\infty)$.

  Then, there exists a constant $C_{stab}'>0$, depending on $ \Omega,$
  $T,$ $a_0,$ $a_1,$ $a_2,$ $a_3,$ $d_v,$ $C_f,$ $C'_f$, $C_g,$
  $C_g'$, and on $\|u_1\|_{L^{\infty}(\Omega_T)}$,
  $\|u_2\|_{L^{\infty}(\Omega_T)}$, $\|v_1\|_{L^{\infty}(\Omega_T)}$,
  $\|v_2\|_{L^{\infty}(\Omega_T)}$, such that
  \begin{multline}\label{uniqueness-ineq-prop}
    \| u_1 - u_2\|_{L^\infty([0,T]; (H^1)'(\Omega))}^2 + \|v_1 - v_2\|_{L^\infty([0,T]; (H^1)'(\Omega))}^2
    \\
    \le C_{stab}'\big(\|u_{1,\init}-u_{2,\init}\|^2_{(H^1)'(\Omega)}+\|v_{1,\init}-v_{2,\init}\|^2_{(H^1)'(\Omega)}\big).
  \end{multline}

  Finally, if $u_{1,\init}=u_{2,\init}$ and $v_{1,\init}=v_{2,\init}\,$ for a.e. $x\in \Omega,$ then
  \[
    u_1(t,x)=u_2(t,x)\qquad\text{ and }\qquad v_1(t,x)=v_2(t,x),\quad\text{ a.e. $(t,x)\in \Omega_T$},
  \]
  so that uniqueness holds for system \eqref{macro step 1}, \eqref{boundary cond macro step 1}.
\end{thm}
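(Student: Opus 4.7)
The plan is to linearize the difference equations via the mean value theorem, test them against the Neumann potentials whose gradients realise the $(H^1)'$ seminorms, and close a Gronwall argument on a suitably weighted sum of the two dual norms. Set $U \coloneqq u_1 - u_2$, $V \coloneqq v_1 - v_2$, $\bar U \coloneqq U - (U)_\Omega$, $\bar V \coloneqq V - (V)_\Omega$. Using the $C^1$ regularity of $A$, $f$, $g$ together with the $L^\infty$ bounds on $u_i,v_i$ provided by \textit{Theorem \ref{thm existence (u,v) + regularity}} (which require $N\le 3$), the mean value theorem yields
\[
A(u_1,v_1) - A(u_2,v_2) = \tilde a_1\, U + \tilde a_2\, V,
\]
with $a_0 \le \tilde a_1 \le a_1$ and $|\tilde a_2| \le a_2$, together with analogous representations $u_1 f(u_1,v_1) - u_2 f(u_2,v_2) = \tilde f_u\, U + \tilde f_v\, V$ and $v_1 g(u_1,v_1) - v_2 g(u_2,v_2) = \tilde g_u\, U + \tilde g_v\, V$, whose coefficients lie in $L^\infty(\Omega_T)$ with bounds depending only on the quantities listed in the statement. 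The pair $(U,V)$ then solves a linear cross-diffusion system in divergence form, inheriting the Neumann conditions \eqref{boundary cond macro step 1}.

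For each $t$, let $\phi_U$ (resp.\ $\phi_V$) be the zero-mean Neumann solution of $-\Delta \phi_U = \bar U$ (resp.\ $-\Delta \phi_V = \bar V$), so that $\|\nabla \phi_U\|_{L^2}^2$ and $\|\nabla \phi_V\|_{L^2}^2$ equal the mean-free components of $\|U\|_{(H^1)'}^2$ and $\|V\|_{(H^1)'}^2$. Testing the $U$-equation against $\phi_U$ and integrating by parts twice (using the Neumann BCs on $A_1-A_2$ and on $\phi_U$, and the standard identity $\tfrac12 \tfrac{d}{dt}\|\nabla\phi_U\|_{L^2}^2 = \int \partial_t U\,\phi_U$) yields
\[
\tfrac12\tfrac{d}{dt}\|\nabla\phi_U\|_{L^2}^2 + \int_\Omega \tilde a_1 \bar U^2\,\dd x = -(U)_\Omega\!\int_\Omega \tilde a_1 \bar U\,\dd x - \int_\Omega \tilde a_2 V\bar U\,\dd x + \int_\Omega (\tilde f_u U + \tilde f_v V)\phi_U\,\dd x,
\]
with the dissipation satisfying $\int \tilde a_1 \bar U^2 \ge a_0 \|\bar U\|_{L^2}^2$. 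The analogue for $V$ is cleaner because $d_v$ is constant,
\[
\tfrac12\tfrac{d}{dt}\|\nabla\phi_V\|_{L^2}^2 + d_v \|\bar V\|_{L^2}^2 = \int_\Omega (\tilde g_u U + \tilde g_v V)\phi_V\,\dd x,
\]
and integrating the $U$- and $V$-equations over $\Omega$ kills the Laplacians and controls $\tfrac{d}{dt}(U)_\Omega^2$, $\tfrac{d}{dt}(V)_\Omega^2$ by the averaged reactions only. Applying Cauchy--Schwarz, the Poincaré inequality $\|\phi_U\|_{L^2}\le C\|\nabla\phi_U\|_{L^2}$ on zero-mean $\phi_U$, and Young (with decomposition $U = \bar U + (U)_\Omega$, $V = \bar V + (V)_\Omega$), each right-hand side contribution is bounded either by a small fraction of the dissipations $\|\bar U\|_{L^2}^2$, $\|\bar V\|_{L^2}^2$ (absorbed into the LHS), or by a multiple of $E_1 \coloneqq \|U\|_{(H^1)'}^2$ or $E_2 \coloneqq \|V\|_{(H^1)'}^2$.

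The main obstacle is the cross-diffusion term $\int \tilde a_2 \bar V \bar U$ in the $U$-estimate: Young leaves behind a contribution of order $\tfrac{a_2^2}{a_0}\|\bar V\|_{L^2}^2$ whose coefficient is not a priori controlled by the $V$-dissipation constant $d_v$, so one cannot simply add the $U$- and $V$-estimates. To circumvent this, we work with the weighted functional $W(t) \coloneqq \alpha E_1(t) + \beta E_2(t)$, choosing $\beta/\alpha$ large enough that $\alpha\, a_2^2/a_0 \ll \beta\, d_v$, and taking the remaining Young parameters (in particular those producing $\|\bar U\|_{L^2}^2$-terms from the $V$-estimate and from the evolution of the means) small enough in terms of $\alpha/\beta$. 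This yields
\[
\tfrac{d}{dt} W(t) + c_1 \|\bar U\|_{L^2}^2 + c_2 \|\bar V\|_{L^2}^2 \le C\, W(t),
\]
for positive constants $c_1, c_2$ and $C$ depending only on the quantities listed in the statement. Gronwall's lemma then gives $W(t) \le e^{Ct} W(0)$, which, by the equivalence $W \sim E_1 + E_2$, implies \eqref{uniqueness-ineq-prop}. Taking $u_{1,\init}=u_{2,\init}$ and $v_{1,\init}=v_{2,\init}$ makes $W(0)=0$, whence $U\equiv V\equiv 0$ and uniqueness follows.
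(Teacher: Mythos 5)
Your proposal is correct and follows essentially the same route as the paper: testing the difference equations against the zero-mean Neumann potentials, extracting the $L^2$ dissipation from the monotonicity $a_0\le\partial_1 A$, tracking the averages $(u_1-u_2)_\Omega$, $(v_1-v_2)_\Omega$ separately, and absorbing the cross-diffusion remainder $\tfrac{a_2^2}{a_0}\|\bar V\|_{L^2}^2$ by weighting the $V$-functional with a large constant (your $\beta/\alpha$ is the paper's $M$) before applying Gronwall. The only cosmetic difference is that you phrase the differences of the nonlinearities via the mean value theorem, whereas the paper uses the equivalent telescoping splitting $A(u_1,v_1)-A(u_2,v_1)+A(u_2,v_1)-A(u_2,v_2)$.
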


\begin{remark}
  The stability results only depend on the dimension through the
  assumptions on the solutions (as obtained in \textit{Theorem
    \ref{thm existence (u,v) + regularity}}) and can be rephrased as
  conditional stability results in all dimensions.

  For the weak stability result, the stability constant depends on
  both solutions but the dependency on the first solution
  \((u_1,v_1)\) can be measured in a weaker norm, see the remark at
  the end of the proof of \textit{Theorem \ref{nuq}}.
\end{remark}

For the modeling of biological populations in starvation driven situations, we refer to \cite{ChoKim}, \cite{KIM2014} and \cite{Brocchieri}. We refer to \cite{SKT} for one of the first models in population dynamics involving cross-diffusion terms. Existence, uniqueness and regularity results for triangular cross-diffusion systems can be found in \cite{Lou1998}, \cite{Yamada},  \cite{Conf-Des-Sores},
 \cite{Desvillettes}, \cite{DesTres} (note here that by triangular, we mean that cross-diffusion terms appear only in one of the equations of the system).  Finally, for results on the stability of equilibria for cross-diffusion models, we refer to \cite{IMN},  \cite{Lombardo2013}, \cite{Lombardo2012}, \cite{Lombardo2008}, \cite{Capone}, \cite{Des-Sores}, and the references therein.
\medskip

The rest of the paper is organised as follows: \textit{Section \ref{sec:existence}} aims to prove the existence result, stated in \textit{Theorem \ref{thm existence (u,v) + regularity}}. More precisely, in \textit{Subsection \ref{subsec:intro reg syst}}, we introduce a regularized system in \textit{non-divergence} form and in \textit{Subsection \ref{subsec:exist reg syst nondiv}}, we prove the existence of a strong solution to this regularized  system. Then, we explain in \textit{Subsection \ref{subsec: exist reg syst div}  } how it enables to obtain existence for an equivalent regularized system in \textit{divergence} form.
In \textit{Subsection \ref{subsec: end proof main thm}}, we conclude the proof of  \textit{Theorem \ref{thm existence (u,v) + regularity}} by removing the regularization. The stability statements are then proven in \textit{Section \ref{sec:uniqueness}}. A classical result for parabolic equations is finally recalled in \textit{Appendix \ref{app-sect:esistence}}.

\section{Proof of the main Theorem~\ref{thm existence (u,v) + regularity}}\label{sec:existence}

\subsection{Introduction of a regularized  system in \textit{non-divergence} form}\label{subsec:intro reg syst}

We first introduce a parabolic system which is formally equivalent to
\eqref{macro step 1}--\eqref{initial data step1}. For
this, we consider the equation satisfied by  $a := A(u,v)$.

By assumption \eqref{hp D2}, we can define the reciprocal $U$ of
$A$ with respect to the first variable, that is, for a given $v\ge0$,
\begin{equation}\label{c of v}
a= A(u,v)\qquad\Longleftrightarrow\qquad u= U(a,v).
\end{equation}
Using this change of variable, we can rewrite \eqref{macro step
  1}--\eqref{initial data step1} as follows:
\begin{equation*}
  \begin{cases}
    \partial_t a=\mu(a,v)\Delta a+a \,s(a,v,\partial_tv),\qquad\qquad &\text{in }\Omega_T,\\
    \partial_t v=d_v\Delta v+vg(U(a,v), v),&\text{in }\Omega_T,\\
    \nabla a\cdot\sigma=\nabla v\cdot\sigma=0,&\text{on }(0,T)\times\pa\Omega,\\
  \end{cases}
\end{equation*}
with $a(0, \cdot) = a_{\init}\coloneqq A(u_{\init},v_{\init})$, $v(0,\cdot) = v_{in}$,
\begin{equation}\label{def mu}
  \mu(a,v)\coloneqq \partial_1 A(U(a,v),v),
\end{equation}
and for all $a > 0$
\begin{equation*}
  s\big(a,v,\partial_t v\big)\coloneqq
  \f{U(a,v)}{a}\left[f\big(U(a,v),v\big)\partial_1 A(U(a,v),v)
    +\partial_2B\big(U(a,v),v\big)\partial_tv\right].
\end{equation*}

By \eqref{hp D1}, \eqref{hp D2} we observe that from
\eqref{def mu}, $\mu\in C(\R_+ \times \R_+)$ and
\begin{equation}\label{hp mu}
     \text{for all} \,\,\, a,v\ge 0,\qquad 0<a_0\le\mu(a,v)\le a_1,
\end{equation}
and thanks to estimate \eqref{def B}, we get for all $a>0$ and $v\ge0$
\begin{equation}\label{property U/a}
  0<\f 1{a_1}\le \f{U(a,v)}{a}\le \f 1{a_0}.
\end{equation}
Moreover by \eqref{hp D1}, \eqref{hp D2}, the implicit
function theorem guarantees the $C^1$ character of $U$ and, for all
$a,v\ge0$, the estimates
\begin{equation}\label{pa1 U}
\f1{a_1}\le\partial_1 U(a,v)=\Big(\partial_1 A\big(U(a,v),v\big)\Big)^{-1}=\f 1{\mu(a,v)}\le \f 1{a_0},
\end{equation}
using \eqref{def mu}, \eqref{hp mu}, and thanks to (\ref{hp D2}),
\begin{equation}\label{pa2 U}
  -  \f{a_2}{a_0} \le \partial_2 U(a,v)=
 -\f {\partial_2 A\big(U(a,v),v\big)}{\mu(a,v)} \le \f{a_2}{a_0}.
\end{equation}
We now introduce the following truncated-regularized system in
\textit{non-divergence} form
%.  For given functions $\rho = \rho(t)$
%defined on $[0,T]$ and $z=z(x)$ defined on $\Omega,$ we define the
%extended functions $\check{\rho}(t)$ and $\widetilde{z}(x)$,
%respectively, where $\check{\rho}(t)$ is such that
%\begin{equation*}
%  \check\rho(t)\coloneqq
%  \begin{cases}
%    \rho(t),\quad&\text{ if }t\in[0,T],\\
%    \rho(0),\quad&\text{ if }t<0,\\
%    \rho(T),\quad&\text{ if }t>T,\\
%  \end{cases}
%\end{equation*}
%while $\widetilde{z}$ is $z$ extended by zero outside $\Omega$.
%\medskip
 for any $\e,M>0$, % we write down the following truncated-regularized
%system in \textit{non-divergence} form,
\begin{equation}\label{system regular eps (a, v)}
  \begin{cases}
    \partial_t a_{\e,M}=\mu(a_{\e,M},v_{\e,M})\Delta
    a_{\e,M}+a_{\e,M}s_{M}(a_{\e,M}, v_{\e,M},\partial_t v_{\e,M}),
    &\text{in }\Omega_T,\\
    \partial_t v_{\e,M}=d_v\Delta
    v_{\e,M}+v_{\e,M}g_{\e,M}(U(a_{\e,M},v_{\e,M}), v_{\e,M}),
    &\text{in }\Omega_T,\\
    \nabla a_{\e,M}\cdot\sigma=\nabla v_{\e,M}\cdot\sigma=0,&\text{on }(0,T)\times\pa\Omega,\\
  \end{cases}
\end{equation}
with
\begin{equation}\label{def s_M}
  s_M\big(a,v,\partial_t v\big)
  \coloneqq
  \f{U(a,v)}{a}\left[f\big(\min\{U(a,v),M\} ,v\big)\partial_1 A(U(a,v),v)
    +\partial_2B\big(U(a,v),v\big)\partial_tv\right],
\end{equation}
and where we define, for a.e. $(t,x)\in \R^{N+1}$ and for all $\e>0$,
\begin{equation}\label{def g M eps}
  g_{\e,M}\big(u, v\big)
  \coloneqq g\big(\min\{ u, M\}, \min\{v,M\} \big)\ast_{t,x} \varphi_\e .
\end{equation}
Note that, slightly abusing notations, $u$ and $v$ in \eqref{def g M
  eps} are identified to their respective extension defined on
$\R^{N+1}$, by continuity in time outside $(0,T)$ and by zero in space
outside $\Omega$. Finally, $\ast_{t,x}$ stands for the convolution
operation in time and space variables and $(\varphi_\e)_{\e>0}$ is a
family of standard mollifiers on $\R^{N+1}$.

 Moreover, the system \eqref{system
  regular eps (a, v)}--\eqref{def g M eps} is endowed with the
initial data
\begin{equation}\label{initial data general ae ve}
  \begin{split}
    a_{\e,M}(0,x)
    &=a_{\init,\e}(x)=A\left(\big( u_{\init}\ast_x\psi_\e\big)(x)
      ,
      ( v_{\init}\ast_x\psi_\e)(x)    \right),\qquad\quad x\in \Omega,\\
    v_{\e,M}(0,x)
    &=v_{\init,\e}(x)=( v_{\init}\ast_x\psi_\e)(x), \hspace{3.77cm} x\in \Omega ,
  \end{split}
\end{equation}
where again, $u_{\init}, v_{\init}$ are extended by zero outside $\Omega$, and $(\psi_\e)_{\e>0}$ is a family of standard
mollifiers on $\R^{N}$.

It is worth noticing that the regularization and truncation only
affect the reaction part in \eqref{system regular eps (a, v)} and the
initial conditions \eqref{initial data general ae ve}. Note also that
we truncate the function $f$ only with respect to $u$, while no
truncation with respect to $v$ is needed.

\subsection{Existence for the regularized system in \textit{non-divergence} form}\label{subsec:exist reg syst nondiv}

In this subsection, we prove existence to  system
\eqref{system regular eps (a, v)}--\eqref{initial data
  general ae ve}. We first state the following existence result.

\begin{prop}\label{prop delta limit}
  Let $N\ge 1$ and $\Omega$ be a smooth bounded open
  subset of $\R^N$. We suppose that the parameters $d_v, B, f,g$
  satisfy Assumption~A.  We consider nonnegative initial data
  $(a_{\init},v_{\init})\in (L^{\infty} \cap H^1) (\Omega) \times
  L^{\infty}(\Omega).$

  Then, for any fixed $\e,M>0,$ there exists
  a nonnegative strong (in the same sense as in \textit{Theorem~\ref{thm
    existence (u,v) + regularity}}) solution $(a_{\e,M},v_{\e,M})$ to
  \eqref{system regular eps (a, v)}--\eqref{initial data
    general ae ve}, such that for all $i,j=1,\dots,N$,
  \begin{enumerate}[label={\roman*)}]
  \item $a_{\e,M}\in L^{\infty}(\Omega_T),$
    $\,\partial_t a_{\e,M},\partial^2_{x_ix_j} a_{\e,M}\in
    L^2(\Omega_T),\partial_{x_i} a_{\e,M}\in L^4(\Omega_T)$,
  \item
    $v_{\e,M},\,\partial_{x_i}
    v_{\e,M}, \, \partial_t v_{\e,M}  \in {L^{\infty}(\Omega_T)}
    ,\, \partial^2_{x_ix_j} v_{\e,M} \in
    L^p(\Omega_T)$ for all $p \in [1, \infty)$.
  \end{enumerate}
\end{prop}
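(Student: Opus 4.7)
I plan to build $(a_{\e,M}, v_{\e,M})$ via Schauder's fixed point theorem in the space $X := C([0,T]; L^2(\Omega))^2$. For $(\bar a, \bar v)$ in a closed convex ball $K \subset X$ of nonnegative pairs with a large uniform $L^\infty$ bound, I define $\Phi(\bar a, \bar v) = (a, v)$ in two decoupled steps. Thanks to the space-time mollification in \eqref{def g M eps}, the quantity $g_{\e,M}(U(\bar a, \bar v), \bar v)$ is a function of $(t,x)$ only, of class $C^\infty_{t,x}$ and bounded by $C_g(1+2M)$. I therefore first solve the linear Neumann problem
\begin{equation*}
    \partial_t v - d_v \Delta v = v \cdot g_{\e,M}(U(\bar a, \bar v), \bar v), \qquad v(0, \cdot) = v_{\init,\e},
\end{equation*}
and then, using the resulting $\partial_t v$ as a datum, solve the linear non-divergence Neumann problem
\begin{equation*}
    \partial_t a - \mu(\bar a, \bar v) \Delta a = \bar a \cdot s_M(\bar a, \bar v, \partial_t v), \qquad a(0,\cdot) = a_{\init,\e},
\end{equation*}
whose principal coefficient is uniformly elliptic, $a_0 \le \mu(\bar a, \bar v) \le a_1$, by \eqref{hp mu}.

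The critical analytic input is to propagate enough regularity from the $v$ subproblem that the source in the $a$ subproblem is bounded. The quantity $\partial_t v$ enters $s_M$ only multiplied by the bounded factors $\partial_2 B$ (see \eqref{d2 B in Linf}) and $U(\bar a, \bar v)/\bar a \le 1/a_0$ (see \eqref{property U/a}), so it suffices to prove $\partial_t v \in L^\infty(\Omega_T)$. This is precisely what the mollification of $g_{\e,M}$ is designed to enable: since the coefficient of $v$ is smooth in $(t,x)$, De Giorgi--Nash--Moser first gives $v \in C^\alpha(\Omega_T)$, and a Schauder bootstrap then upgrades this to $v \in C^{1,\alpha/2}_t \cap C^{2,\alpha}_x$, so in particular $\partial_t v$ and $\Delta v$ are in $L^\infty(\Omega_T)$. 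With this in hand, standard $L^p$ theory for non-divergence parabolic equations with continuous coefficients (as recalled in the Appendix) applied to the $a$ subproblem yields $a \in L^p_t W^{2,p}_x \cap W^{1,p}_t L^p_x$ for every $p < \infty$, from which $\partial_{x_i} a \in L^4(\Omega_T)$ follows by interpolation with the $L^\infty$ bound on $a$.

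To close the argument, I would verify the three ingredients of Schauder's theorem: (i) a uniform $L^\infty$ bound on $a$ and $v$ from the maximum principle applied to each linear problem, which also provides nonnegativity and makes $K$ invariant for an appropriate radius; (ii) continuity of $\Phi$ on $X$, which follows from continuous dependence of linear parabolic problems on their coefficients and data, using that $(\bar a, \bar v) \mapsto g_{\e,M}(U(\bar a, \bar v), \bar v)$ is continuous from $X$ into $L^\infty(\Omega_T)$ thanks to the convolution with $\varphi_\e$; and (iii) relative compactness of $\Phi(K)$ in $X$ via the Aubin--Lions lemma, using uniform $L^2$ bounds on $\partial_t a, \partial_t v, \nabla a, \nabla v$. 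Applying Schauder then produces a fixed point $(a_{\e,M}, v_{\e,M})$ satisfying \eqref{system regular eps (a, v)}--\eqref{initial data general ae ve}, and the regularity assertions i) and ii) are read off from the linear parabolic bounds established above. The main obstacle throughout is securing the bound $\partial_t v_{\e,M} \in L^\infty(\Omega_T)$, which is precisely the motivation for the specific form of the regularization \eqref{def g M eps} via space-time mollification of $g$.
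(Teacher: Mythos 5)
Your overall architecture (decoupled linear subproblems, Schauder fixed point, exploiting the space--time mollification of $g$ to get $\partial_t v\in L^{\infty}(\Omega_T)$) matches the paper's, but there is a genuine gap in the treatment of the $a$-subproblem. For $(\bar a,\bar v)$ ranging over your fixed-point ball $K\subset C([0,T];L^2(\Omega))^2$, the leading coefficient $\mu(\bar a,\bar v)$ is only a bounded measurable function satisfying $a_0\le\mu\le a_1$; it is \emph{not} continuous, let alone VMO. The ``standard $L^p$ theory for non-divergence parabolic equations with continuous coefficients'' that you invoke therefore does not apply, and your claimed conclusion $a\in L^p_tW^{2,p}_x\cap W^{1,p}_tL^p_x$ for every $p<\infty$ is not available for merely measurable leading coefficients when $N\ge 2$ (this is exactly the regime where Calder\'on--Zygmund/maximal-regularity theory breaks down). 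Note that the Appendix result you point to, \textit{Proposition~\ref{prop 1 eq in b}}, explicitly requires $\gamma\in L^{\infty}([0,T];W^{1,\infty}(\Omega))$ and in any case only delivers $L^2$-based bounds ($\partial_tb,\Delta b\in L^2$), consistent with the statement of \textit{Proposition~\ref{prop delta limit}}, which claims $\partial^2_{x_ix_j}a_{\e,M}\in L^2(\Omega_T)$ only, not $L^p$.

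The paper closes this gap by inserting one more regularization layer that your plan omits: in the fixed-point system \eqref{system Schauder} the coefficient is replaced by $\mu(a_\delta,v_\delta)\ast_x\psi_\delta$, which lies in $L^\infty_tW^{1,\infty}_x$, so that \textit{Proposition~\ref{prop 1 eq in b}} applies and Schauder can be run at fixed $\delta$; the relevant a priori bounds (obtained with the multiplier $\Delta a$, hence depending only on the ellipticity constants $a_0,a_1$ and not on $\delta$) then allow passing to the limit $\delta\to0$ in the product $\big(\mu(a_\delta,v_\delta)\ast_x\psi_\delta\big)\Delta a_\delta$ weakly in $L^1$. Also note that the $L^4$ bound on $\partial_{x_i}a$ should be derived by Gagliardo--Nirenberg interpolation between $\|a\|_{L^\infty}$ and $\|a\|_{L^2_tH^2_x}$ (as in \eqref{ineq Schauder 3}), not from a $W^{2,4}$ bound. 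If you either add the $\delta$-mollification of $\mu$ and a final limit $\delta\to0$, or replace your $L^p$ claim by the $L^2$ energy estimate with multiplier $\Delta a$ together with an approximation-of-the-coefficient argument for existence and uniqueness of the linear subproblem, your proof becomes essentially the paper's.
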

\medskip

We now prove \textit{Proposition~\ref{prop delta limit}}.

\begin{proof}
We want to use a fixed point argument. In order to be able to use
Schauder's theorem, we need first to introduce yet another
regularization.

Let $\e,M>0$ be fixed. We introduce the approximating system below for
all $\delta>0$ (we only indicate the dependence of the unknowns $a,v$
with respect to $\delta$ since $\e$ and $M$ are fixed),
\begin{equation}\label{system Schauder}
  \begin{cases}
    \partial_t a_{\delta}=\big( \mu(a_{\delta},v_{\delta})\ast_x\psi_\delta\big)\Delta a_{\delta}+a_{\delta}s_{M}(a_{\delta}, v_{\delta},\partial_t v_\delta),\qquad&\text{in }\Omega_T,\\
    \partial_t v_{\delta}=d_v\Delta v_{\delta}+v_{\delta}\,g_{\e,M}(U(a_{\delta},v_{\delta}), v_{\delta}),&\text{in }\Omega_T,\\
    \nabla a_{\delta}\cdot\sigma=\nabla v_{\delta}\cdot\sigma=0,&\text{on }(0,T)\times\pa\Omega,
  \end{cases}
\end{equation}
where, once again slightly abusing notations, $\mu(a_\delta,v_\delta)$ is identified to its extension by zero defined on $[0,T]\times\R^N$, while $(\psi_\delta)_{\delta}$ is a standard mollifier on
$\R^N$. Finally, $\mu, s_M,\,g_{\e,M}$ are defined as in \eqref{def
  mu}, \eqref{def s_M}, \eqref{def g M eps} respectively,
and the nonnegative initial data  are defined as in
\eqref{initial data general ae ve}, i.e.
\begin{equation}\label{initial data (ae,ve)}
  a_{\delta}(0,x) :=a_{\init,\e}(x)\ge 0,\qquad
  v_{\delta}(0,x) :=v_{\init,\e}(x)\ge 0.
\end{equation}

The existence to system \eqref{system Schauder}, \eqref{initial data
  (ae,ve)} is obtained by applying Schauder's fixed point theorem
\cite{Evans} on the Banach space
\begin{equation*}
  E\coloneqq L^{\infty}\big([0,T]; L^2(\Omega)\big),
\end{equation*}
and with the map
\begin{equation}\label{def map Phi}
  \Phi_{\delta}: (a_\delta,v_\delta)\in E^2\mapsto (\bar a_\delta,\bar v_\delta),
\end{equation}
where $\bar v_\delta$ satisfies
\begin{equation}\label{eq bar ve}
  \begin{cases}
    \partial_t\bar v_\delta=d_v\Delta \bar v_\delta+\bar v_\delta g_{\e,M}\big(U(a_\delta,v_\delta),v_\delta\big),\qquad&\text{in }\Omega_T,\\
    \nabla \bar v_\delta\cdot\sigma=0,\qquad &\text{on }(0,T)\times\pa\Omega,\\
    \bar v_\delta(0,x)=v_{\init,\e}, \qquad&\text{on } \Omega,
  \end{cases}
\end{equation}
and $\bar a_{\delta}$ solves
\begin{equation}\label{eq bar ae}
  \begin{cases}
    \partial_t\bar  a_{\delta}=\big(\mu(a_{\delta},\bar v_{\delta})\ast_x\psi_\delta\big)\Delta \bar a_{\delta}+\bar a_{\delta}s_{M}(a_{\delta}, \bar v_{\delta}, \partial_t\bar v_\delta),&\text{in }\Omega_T,\\
    \nabla \bar a_\delta\cdot\sigma=0, &\text{on }(0,T)\times\pa\Omega,\\
    \bar a_\delta(0,x)=a_{\init,\e},&\text{on } \Omega.
  \end{cases}
\end{equation}

Note that \eqref{eq bar ve} and \eqref{eq bar ae} are
linear parabolic problems for which the existence of a unique
nonnegative strong solution is classical (see
\textit{Proposition~\ref{prop 1 eq in b}} of the Appendix).  Then, we show that the map
\eqref{def map Phi} satisfies the assumptions of Schauder's
fixed point theorem.
\medskip

Indeed, thanks to the maximum principle for the heat equation, we first see that since
$$ g_{\e,M}\big(U(a_\delta,v_\delta),v_\delta\big) \le C_g, $$
we get the estimate
\begin{equation}\label{ineq Schauder 1}
  \|\bar v_{\delta}\|_{L^{\infty}(\Omega_T)}\le e^{TC_g}\|v_{\init,\e}\|_{L^{\infty}(\Omega)} .
\end{equation}
Then, we take the derivative in time of (\ref{eq bar ve}):
$$ (\pa_t - d_v \Delta) (\pa_t \bar v_{\delta}) = \pa_t \bar v_{\delta} \,  \, g\big(\min\{ U(a_{\delta}, v_{\delta}), M\}, \min\{v_{\delta}, M\}\big)\ast_{t,x}  \varphi_\e $$
$$+ \,\bar v_{\delta}  \, g\big(\min\{ U(a_{\delta}, v_{\delta}) , M\}, \min\{v_{\delta}, M\} \big)\ast_{t,x} \pa_t \varphi_\e .$$
Observing that
$$ \| g\big(\min\{ U(a_{\delta}, v_{\delta}), M\}, \min\{v_{\delta}, M\}\big)\ast_{t,x}  \varphi_\e \|_{L^{\infty}(\Omega)} $$
$$ + \, \| g\big(\min\{ U(a_{\delta}, v_{\delta}), M\}, \min\{v_{\delta}, M\} \big)\ast_{t,x} \pa_t \varphi_\e  \|_{L^{\infty}(\Omega)}  \le C(\e, M, T), $$
where $C(\e, M, T)$ is a generic constant depending on $\e, M, T$ but not $\delta$, and using estimate (\ref{ineq Schauder 1}), we can now use
the maximum principle for the heat equation, and get
\begin{equation}\label{dtch}
  \| \pa_t \bar v_{\delta} \|_{L^{\infty}(\Omega_T)}\le C(\e, M, T) , \qquad
\| s_M(a_{\delta}, \bar v_{\delta}, \pa_t \bar v_{\delta} ) \|_{L^{\infty}(\Omega_T)}\le C(\e, M, T),
\end{equation}
with $s_M$ is defined in \eqref{def s_M} and using \eqref{hp D2}, \eqref{hp R1}, \eqref{d2 B in Linf}, \eqref{property U/a}. Moreover, the maximal regularity implies
\begin{equation}\label{ineq Schauder 2}
 \|\partial^2_{x_ix_j}\bar v_\delta\|_{L^p(\Omega_T)} \le  C(\e,M,T),
\end{equation}
for all $i,j=1,\dots,N$, $ p\in [1, +\infty)$.  Then $\bar a_\delta$
satisfies, thanks to \textit{Proposition~\ref{prop 1 eq in b}} (and an
interpolation between $L^{\infty}(\Omega_T)$ and
$L^2([0,T]; H^2(\Omega))$) for all $i,j=1,\dots,N$,
\begin{equation}\label{ineq Schauder 3}
 \|\bar a_\delta\|_{L^{\infty}(\Omega_T)} +
  \|\partial_t\bar a_\delta\|_{L^2(\Omega_T)}+\|\partial^2_{x_ix_j}\bar a_\delta\|_{L^2(\Omega_T)}+\|\partial_{x_i}\bar a_\delta\|_{L^4(\Omega_T)}\le  C(\e,M,T).
\end{equation}

Thanks to estimates \eqref{ineq Schauder 1}--\eqref{ineq Schauder 3},
the map $\Phi: E^2\to E^2$ is compact, using the Rellich-Kondrakov
theorem \cite{Brezis}. Moreover, it satisfies the inclusion
$$\Phi(B_Q\times B_Q)\subset B_Q\times B_Q,$$
with
$B_Q\coloneqq B_{L^{\infty}(\Omega_T)}(0,Q)_+ :=\{ w \in E \text{
  s.t. } 0 \le w \le Q \}$ and
$$Q\coloneqq \max \left( e^{T\,C_g}\, \|
  v_{\init,\e}\|^2_{L^{\infty}(\Omega)}, C(\e, M, T) \right)>0.$$
Finally, it is possible to show that $\Phi: E^2\to E^2$ is continuous
(see \cite[\textit{Subsection 3.2.3}]{Brocchieri} for details).
\medskip

Therefore, using Schauder's fixed point theorem, there exists at least
one strong solution $(a_\delta \ge 0, v_\delta \ge 0)$ to
\eqref{system Schauder}, \eqref{initial data (ae,ve)},
satisfying estimates \eqref{ineq Schauder 1}--\eqref{ineq
  Schauder 3} (with $(a_\delta,v_\delta)$ replacing
$({\bar a}_\delta, {\bar v}_\delta)$).  \medskip
\medskip

Using the second equation of \eqref{system Schauder} and the
properties of the heat equation, we also observe that
\begin{equation}\label{neq1}
 \|\partial_{x_i}\bar v_\delta\|_{L^{\infty}(\Omega_T)} \le  C(\e,M,T), \qquad \mbox{for $i=1,..,N$. }
\end{equation}

We now use this solution $(a_\delta, v_\delta)$ and let
$\delta \to 0$.  Estimates \eqref{ineq Schauder
  1}--\eqref{ineq Schauder 3} ensure that we can extract
subsequences (still denoted by $(a_\delta, v_\delta)$) such that for
some $a,v\in L^{\infty}(\Omega_T),$
\begin{equation}\label{conv a.e. adelta vdelta}
  a_\delta\rightarrow a,\,\, v_\delta\rightarrow v\quad\text{ a.e.\ in $\Omega_T$ and strongly in $L^p(\Omega_T),$ for all $ p\in [1, \infty)$,}
\end{equation}
\begin{equation}\label{converg a delta}
\partial_t a_\delta\rightharpoonup\partial_t a,\,\,\Delta a_\delta\rightharpoonup\Delta a,\,\,\nabla a_\delta\rightharpoonup\nabla a,\quad\text{weakly in } L^2(\Omega_T),
\end{equation}
and %for all $p\in[1,+\infty)$
\begin{equation}\label{converg v delta}
\partial_t v_\delta\rightharpoonup\partial_t v,\,\,\Delta v_\delta\rightharpoonup\Delta v,\,\,\nabla v_\delta\rightharpoonup\nabla v,\quad\text{weakly in } L^p(\Omega_T),\text{ for all $ p\in [1, \infty)$}.
\end{equation}
Recalling  estimate \eqref{hp mu}, we end up with
\begin{equation}\label{L1weak limit diff term a}
\big(\mu(a_\delta,v_{\delta})\ast_x\psi_\delta\big)\Delta a_{\delta}\rightharpoonup\mu(a,v)\Delta a,\quad\text{weakly in $L^1(\Omega_T),\,\,\,$ as $\delta\rightarrow 0.$}
\end{equation}
We now take the weak limit in the r.h.s. of the first equation in
\eqref{system Schauder}. The convergences \eqref{conv
  a.e. adelta vdelta}, \eqref{converg v delta} and the
$\delta-$uniform bounds \eqref{ineq Schauder
  1}--\eqref{neq1} ensure that
$s_M(a_\delta,v_\delta,\partial_t v_\delta)$ converges to $s_M(a,v,\partial_t v)$, weakly in $L^p(\Omega_T)$ for all $p\in [1, \infty)$, so that $(a,v)$ satisfy the first equation of system \eqref{system Schauder}.

The convergence in the second equation in system \eqref{system
  Schauder} is obtained in a similar way.  We also pass to the
limit in the boundary condition of \eqref{system Schauder},
using \eqref{conv a.e. adelta vdelta}--\eqref{converg v delta} and
the continuity of the trace
operator.
\medskip

 Finally, using the weak lower
semi-continuity property of the $L^p(\Omega_T)$ norm for
$p\in (1,+\infty]$ and estimates \eqref{ineq Schauder
  1}--\eqref{neq1}, we conclude that
$(a_{\e,M},v_{\e,M})$ is a strong solution to the system
\eqref{system regular eps (a, v)}--\eqref{initial data
  general ae ve}, satisfying the bounds i) and ii) announced in \textit{Proposition \ref{prop delta limit}}.
\end{proof}

\subsection{Existence for the regularized system in \textit{divergence} form}\label{subsec: exist reg syst div}

Hereafter, we restore the $\e,M-$dependency in the notation, so that
we refer to the a.e.\ limit of $a_\delta,v_\delta$ as
$a_{\e,M},v_{\e,M},$ respectively.
\medskip

In this subsection, we deduce from \textit{Proposition \ref{prop delta
  limit}} the existence to the following regularized system in
\textit{divergence} form, satisfied by the unknowns
$(u_{\e,M}, v_{\e,M})$
\begin{equation}\label{system eps (u,v)}
\begin{cases}
\partial_t u_{\e,M}=\Delta \big(A(u_{\e,M},v_{\e,M})\big)+u_{\e,M}f_M(u_{\e,M}, v_{\e,M}),\qquad &\text{on }\Omega_T,\\
\partial_t v_{\e,M}=d_v\Delta v_{\e,M}+v_{\e,M}g_{\e,M}(u_{\e,M}, v_{\e,M}),&\text{on }\Omega_T,\\
\nabla \big(A(u_{\e,M},v_{\e,M})\big)\cdot\sigma=\nabla v_{\e,M}\cdot\sigma=0,&\text{on }\,(0,T)\times\pa\Omega,
\end{cases}
\end{equation}
with $f_M(u,v) \coloneqq f(\min\{u,M\},v)$ (and, as previously,
$g_{\e,M}\big(u, v\big)\coloneqq g\big( \min\{ u,
M\}, \min\{v, M \} \big)\ast_{t,x} \varphi_\e$). We complete the
system with the initial conditions
\begin{equation}\label{def initial data ue ve}
\begin{split}
u_{\e,M}(0,x)&=u_{\init,\e}(x) :=\big(u_{\init}\ast_x\psi_\e\big)(x),\qquad \forall\,x\in\Omega,\\
v_{\e,M}(0,x)&=v_{\init,\e}(x) :=\big(v_{\init}\ast_x\psi_\e\big)(x), \qquad\, \forall\,x\in\Omega.
\end{split}
\end{equation}

The existence result is stated in the following corollary.

\begin{cor}\label{prop existence regul-trunc (ue, ve) system}
  Let $N\ge 1$ and $\Omega$ be a smooth bounded open
  subset of $\R^N$. We suppose that the parameters $d_v, B, f,g$
  satisfy Assumption~A.  We consider the initial data
  $u_\init\in (L^{\infty}\cap H^1)(\Omega),$
  $v_\init\in L^{\infty}(\Omega)$ compatible with Neumann boundary
  condition.

  Then, for any fixed $\e,M>0$ there exists a nonnegative strong (in
  the sense of Theorem~\ref{thm existence (u,v) + regularity})
  solution $(u_{\e,M},v_{\e,M})$ to system \eqref{system eps
    (u,v)}, \eqref{def initial data ue ve}, such that for all $i,j=1,\dots,N, $
  \begin{enumerate}[label={\roman*)}]
  \item $u_{\e,M}\in L^{\infty}(\Omega_T),$ $\,\partial_t u_{\e,M},\partial^2_{x_ix_j} A(u_{\e,M}, v_{\e, M}) \in L^2(\Omega_T),\partial_{x_i} u_{\e,M}\in L^4(\Omega_T)$,
  \item $v_{\e,M},\,\partial_{t} v_{\e,M},\,\partial_{x_i} v_{\e,M} \in L^\infty(\Omega_T)$, $\partial^2_{x_ix_j} v_{\e,M} \in L^p(\Omega_T)$ for all $p \in [1, \infty)$.
  \end{enumerate}
\end{cor}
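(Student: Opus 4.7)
The plan is to obtain $(u_{\e,M},v_{\e,M})$ from the solution $(a_{\e,M},v_{\e,M})$ of the non-divergence system provided by \textit{Proposition \ref{prop delta limit}} by inverting the change of variable $a=A(u,v)$. More precisely, I would apply \textit{Proposition \ref{prop delta limit}} with initial data $v_{\init,\e}:=v_{\init}\ast_x\psi_\e$ and $a_{\init,\e}:=A(u_{\init,\e},v_{\init,\e})$. Since $A$ is $C^1$ with bounded first derivatives by \eqref{hp D1}--\eqref{hp D2}, and the mollified initial data are smooth on $\Omega$, one checks that $a_{\init,\e}\in(L^\infty\cap H^1)(\Omega)$ and $v_{\init,\e}\in L^\infty(\Omega)$, which are exactly the hypotheses needed. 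Then I would set $u_{\e,M}:=U(a_{\e,M},v_{\e,M})$, with $U$ the reciprocal defined in \eqref{c of v}.

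The second equation of \eqref{system eps (u,v)} and its boundary and initial conditions are immediate, because the second equation of \eqref{system regular eps (a, v)} already involves precisely $U(a_{\e,M},v_{\e,M})$, and by construction $U(A(u_{\init,\e},v_{\init,\e}),v_{\init,\e})=u_{\init,\e}$. For the first equation, I would apply the chain rule to obtain
\[
\partial_t u_{\e,M}=\partial_1 U(a_{\e,M},v_{\e,M})\,\partial_t a_{\e,M}+\partial_2 U(a_{\e,M},v_{\e,M})\,\partial_t v_{\e,M},
\]
then plug in the first equation of \eqref{system regular eps (a, v)} and the formulas \eqref{pa1 U}, \eqref{pa2 U}, the identity $\partial_2 A(u,v)=u\,\partial_2 B(u,v)$, and the definition \eqref{def s_M} of $s_M$. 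Writing $a_{\e,M}=A(u_{\e,M},v_{\e,M})$, the $\partial_t v$-contributions cancel exactly and the remaining terms collapse to $\Delta A(u_{\e,M},v_{\e,M})+u_{\e,M}\,f_M(u_{\e,M},v_{\e,M})$, as desired. The Neumann condition transports similarly, since $\nabla a_{\e,M}\cdot\sigma=\nabla A(u_{\e,M},v_{\e,M})\cdot\sigma$. Nonnegativity of $u_{\e,M}$ follows from $U(0,v)=0$ (since $A(0,v)=0$) together with $\partial_1 U\ge 1/a_1>0$.

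The regularity i)--ii) is obtained by transferring the bounds of \textit{Proposition \ref{prop delta limit}} through $U$. The uniform bounds \eqref{property U/a}, \eqref{pa1 U}, \eqref{pa2 U} on $U$ and its derivatives yield directly $u_{\e,M}\in L^\infty(\Omega_T)$ and, via the chain rule $\partial_{x_i}u_{\e,M}=\partial_1 U\,\partial_{x_i}a_{\e,M}+\partial_2 U\,\partial_{x_i}v_{\e,M}$, the bound $\partial_{x_i}u_{\e,M}\in L^4(\Omega_T)$ (using also $\partial_{x_i}v_{\e,M}\in L^\infty(\Omega_T)$). The estimate $\partial_{x_ix_j}^2 A(u_{\e,M},v_{\e,M})=\partial_{x_ix_j}^2 a_{\e,M}\in L^2(\Omega_T)$ is tautological, and $\partial_t u_{\e,M}\in L^2(\Omega_T)$ then follows from the divergence-form equation already established. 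The estimates on $v_{\e,M}$ are inherited verbatim.

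The only genuine subtlety is justifying the chain rule rigorously at the level of weak derivatives, given that $u_{\e,M}$ is defined by composing the $C^1$ map $U$ with Sobolev functions $(a_{\e,M},v_{\e,M})$; this is standard since $U$ has globally bounded first derivatives on $\R_+\times\R_+$ and $(a_{\e,M},v_{\e,M})$ have the requisite time and space regularity provided by \textit{Proposition \ref{prop delta limit}}. Beyond this purely technical point, the corollary is essentially a matter of undoing the change of variable performed in \textit{Subsection \ref{subsec:intro reg syst}}.
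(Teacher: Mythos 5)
Your proposal is correct and follows essentially the same route as the paper: take the solution $(a_{\e,M},v_{\e,M})$ of the non-divergence system from \textit{Proposition \ref{prop delta limit}}, set $u_{\e,M}=U(a_{\e,M},v_{\e,M})$, justify the chain rule for the weak derivatives, use $\partial_1 U\,\partial_1 A=1$ (equivalently the cancellation of the $\partial_t v$-contributions you describe) to recover the divergence-form equation, and transfer the boundary/initial conditions and the bounds through the uniformly Lipschitz map $U$. The additional details you supply (verifying $a_{\init,\e}\in(L^\infty\cap H^1)(\Omega)$ and deducing nonnegativity of $u_{\e,M}$ from $U(0,v)=0$ and $\partial_1 U>0$) are consistent with, and slightly more explicit than, the paper's argument.
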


\begin{proof}
  For any $\e,M$ fixed, let \((a_{\e,M},v_{\e,M})\) be a solution to
  \eqref{system regular eps (a, v)}--\eqref{initial data
    general ae ve}, given by \textit{Proposition~\ref{prop delta
      limit}}. Then, by a density argument and recalling the $C^1$
  character of $U$ in \eqref{c of v}, we can prove that
  $u_{\e,M}=U(a_{\e,M}, v_{\e,M})$ admits a weak time derivative,
  given by
  \begin{equation}\label{weak dt u}
    \pa_t u_{\e,M}=	\partial_1 U(a_{\e,M},v_{\e,M}) \partial_t a_{\e,M}
    + \partial_2 U(a_{\e,M},v_{\e,M}) \partial_t v_{\e,M},
  \end{equation}
  belonging to $L^2(\Omega_T)$, and a weak space derivative
  \begin{equation}\label{weak dx u}
    \partial_{x_i}u_{\e,M}= \partial_1 U(a_{\e,M},v_{\e,M}) \partial_{x_i} a_{\e,M}
    + \partial_2 U(a_{\e,M},v_{\e,M}) \partial_{x_i} v_{\e,M},\,\,\forall i=1,\dots N,
  \end{equation}
  belonging to $L^4(\Omega_T)$. Note also that
  $\pa^2_{x_i x_j} A(u_{\e,M}, v_{\e,M}) = \pa^2_{x_i x_j}
  a_{\e,M}$. Then, using \eqref{weak dt u} and the identity
  $\partial_1 U(a,v)\partial_1 A\big(U(a,v),v\big)=1$, we see that
  $u_{\e,M}$ satisfies the first equation of \eqref{system eps
    (u,v)} (a.e.\ in $\Omega_T$). Moreover, the trace of
  $\nabla a_{\e,M}$ on $[0,T]\times \pa\Omega$ is the trace of
  $\nabla \big(A(u_{\e,M}, v_{\e,M})\big)$, so that the first Neumann
  boundary condition in \eqref{system eps (u,v)} is
  satisfied. The same holds for the initial conditions. Finally, the
  equation, boundary condition and initial condition related to
  $v_{\e,M}$ are identical in the system satisfied by
  $(a_{\e,M},v_{\e,M})$ and in the system satisfied by
  $(u_{\e,M},v_{\e,M})$ (when $U(a_{\e,M}, v_{\e,M})$ is replaced by
  $u_{\e,M}$ in the equations). Therefore, $(u_{\e,M}, v_{\e,M})$ is a
  nonnegative strong solution to system \eqref{system eps (u,v)},
  \eqref{def initial data ue ve}. The bounds satisfied by $u_{\e,M}$, $v_{\e,M}$ in \textit{Corollary \ref{prop existence regul-trunc (ue, ve) system}} are a direct consequence of the bounds satisfied by $a_{\e,M}$, $v_{\e,M}$ in \textit{Proposition \ref{prop delta limit}}.

\end{proof}

\subsection{Concluding the existence result}\label{subsec: end proof main thm}

By the previous subsections, we now have at our disposal solutions to
an approximated problem. Most of the bounds obtained for those
solutions are however depending on the truncation-regularization
parameters $\e$ and $M$. Before letting $\e$ tend to $0$ and $M$ tend
to $\infty$, we need to establish bounds (still for the solutions to
the considered approximated problem) which do not depend on $\e$ and
$M$. This is the object of the lemma below.

\begin{lem}\label{lll}
  We consider the solution $(u_{\e,M},v_{\e,M})$ to system
  \eqref{system eps (u,v)}, \eqref{def initial data ue ve}, given by
  Corollary~\ref{prop existence regul-trunc (ue, ve) system}, and we
  assume that $u_{\init}\in (L^\infty\cap H^1)(\Omega)$ and
  $v_{\init}\in (L^{\infty}\cap W^{2,p} \cap H^3)(\Omega)$ for all $p\in
  [1,+\infty)$.

  Then, $(u_{\e,M},v_{\e,M})$ satisfies the following $\e,M-$uniform a priori estimates, for all $p\in [1,+\infty)$
  \begin{equation}\label{unif eps Linf ve and L4 ue}
    \|v_{\e,M}\|_{L^{\infty}(\Omega_T)}\le C(T), \qquad
    \|u_{\e,M}\|_{L^{\infty}([0,T]; L^p(\Omega))}\le C(T),
  \end{equation}
  and for all $i, j = 1, \dots, N$ %and $\e,M>0$
  \begin{equation}\label{unif eps max reg ve}
    \|\pa_t v_{\e,M}\|_{L^p(\Omega_T)}+\|\pa^2_{x_ix_j}v_{\e,M}\|_{L^p(\Omega_T)}
%+\|\pa_{x_i} v_{\e,M}\|_{L^p(\Omega_T)}
\le  C(T),
  \end{equation}
  and
  \begin{align}\label{unif eps max regularity ae}
      \|\pa_tu_{\e,M}\|_{L^2(\Omega_T)}&+\|\pa_{x_i} u_{\e,M}
      \|_{L^{\infty}([0,T]; L^2(\Omega))}
%+\|\pa_{x_i}(A(u_{\e,M},v_{\e,M}))\|_{L^{\infty}([0,T]; L^2(\Omega))}
\notag\\
      &+\|\pa^2_{x_ix_j} (A(u_{\e,M},v_{\e,M}))\|_{L^2(\Omega_T)}\le  C(T).
  \end{align}
%\textcolor{red}{I think that the estimate below is the only one in which we use $v_{in} \in H^3$. We could explain this in a remark. Also we could say in this remark that $u_{in} \in L^p$ for all $p<\infty$ is sufficient except for the case $N<=3$. Laurent} \textcolor{blue}{Done, see remark below. Elisabetta}
%useful only for the case when $N \le 3$.
  Moreover, for all $i,j=1,\dots N$ and for any $\eta>0$
  \begin{equation}\label{nez}
    \begin{split}
      \|\pa^3_{tx_ix_j} v_{\e,M}& \|_{L^2(\Omega_T)}  + \|\pa^2_{tt} v_{\e,M} \|_{L^2(\Omega_T)}+\|\pa^2_{t\, x_i}v_{\e,M}\|_{L^{4-\eta}(\Omega_T)}\le C(T).
    \end{split}
  \end{equation}
  Finally, if $N\le 3$,
  \begin{align}\label{L2Linf bound dt v lap v}
    \|\pa_t v_{\e,M}\| _{L^{2}([0,T]; L^{\infty}(\Omega))}&+\| \pa^2_{x_ix_j} v_{\e,M}\| _{L^{2}([0,T]; L^{\infty}(\Omega))} %+ \| \pa_{x_i} v_{\e,M}\| _{L^{4}([0,T]; L^{\infty}(\Omega))}
\notag\\
  &+ \|u_{\e,M}\|_{L^{\infty}(\Omega_T)} +  \|\pa_{x_i} u_{\e,M}\|_{L^4(\Omega_T)}  \le C(T),
  \end{align}
and if $N=1$ and $u_{\init} \in W^{1,p}(\Omega)$ for all $p\in [1,\infty)$,
\begin{equation}\label{nest1}
 \| \pa_x u_{\e,M}\|_{L^{p}(\Omega_T)}  \le C(T).
\end{equation}

  In the estimates above, we denote as $C(T)>0$ constants which may depend on $T,\Omega$, on the parameters
  appearing in Assumption A
  (that are $d_v$, $C_f$, $C_f'$, $C_g$, $C_g'$, $a_0$, $a_1$, $a_2$, $a_3$), and on the bounds of the
  initial data, but not on $\e, M$.
\end{lem}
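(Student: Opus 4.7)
The plan is a bootstrap in a specific order: control $v_{\e,M}$ first (its equation is essentially a heat equation with bounded reaction, regardless of $u_{\e,M}$), then transfer the control to $u_{\e,M}$ via duality and a dedicated multiplier identity, and finally close the loop on higher regularity. Throughout, we rely on the fact that truncation and mollification preserve the bounds of Assumption~A uniformly in $\e$ and $M$: in particular $g_{\e,M}\le C_g$, $f_M\le C_f$, $|\partial_i f_M|\le C_f'$, and the convolved $g_{\e,M}$ retains the Lipschitz bound of $g$.

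First, the maximum principle applied to the $v$-equation with $g_{\e,M}\le C_g$ yields the $L^\infty$-bound in \eqref{unif eps Linf ve and L4 ue}, and nonnegativity is preserved. Since $v_{\e,M}\in L^\infty$ uniformly, the source $v_{\e,M}g_{\e,M}$ lies in $L^\infty(\Omega_T)$ and parabolic maximal $L^p$-regularity with Neumann data $v_{\init,\e}\in W^{2,p}(\Omega)$ (uniform in $\e$) delivers \eqref{unif eps max reg ve}. To bound $u_{\e,M}$, integrating its equation over $\Omega$ and applying Grönwall gives $u_{\e,M}\in L^\infty_tL^1_x$; a Pierre-type duality argument (test against the solution of the backward dual problem $-\partial_t\phi-B(u_{\e,M},v_{\e,M})\Delta\phi=\theta$, whose coefficient lies uniformly in $[a_0,a_1]$ by \eqref{def B}) then upgrades this to any $L^p$-bound via the identity $A(u,v)=uB(u,v)$.

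The central estimate \eqref{unif eps max regularity ae} is obtained by multiplying the first equation of \eqref{system eps (u,v)} by $\partial_tA(u_{\e,M},v_{\e,M})$, integrating over $\Omega$, and expanding $\partial_tA=\partial_1A\,\partial_tu+\partial_2A\,\partial_tv$. Using \eqref{hp D2} ($\partial_1A\ge a_0$, $|\partial_2A|\le a_2$) and Young's inequality to absorb the mixed and reaction terms, one arrives at
\begin{equation*}
  \tfrac{a_0}{2}\int_\Omega|\partial_tu_{\e,M}|^2 +\tfrac12\tfrac{d}{dt}\int_\Omega|\nabla A(u_{\e,M},v_{\e,M})|^2 \le C\big(\|\partial_tv_{\e,M}\|_{L^2(\Omega)}^2+\|u_{\e,M}f_M\|_{L^2(\Omega)}^2\big).
\end{equation*}
The right-hand side is already controlled by the previous step ($v_{\e,M}\in L^\infty$ and $u_{\e,M}\in L^4(\Omega_T)$ make $u_{\e,M}f_M\in L^2$), so a Grönwall argument gives $\partial_tu_{\e,M}\in L^2(\Omega_T)$ and $\nabla A\in L^\infty_tL^2_x$. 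Inverting $\nabla A=\partial_1A\,\nabla u+\partial_2A\,\nabla v$ via $\partial_1A\ge a_0$ yields $\nabla u_{\e,M}\in L^\infty_tL^2_x$, while the identity $\Delta A=\partial_tu_{\e,M}-u_{\e,M}f_M\in L^2$ combined with elliptic regularity (Neumann BC) gives $\partial^2_{x_ix_j}A\in L^2(\Omega_T)$.

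For the higher-order estimates \eqref{nez}, we differentiate the $v$-equation in time: the source $\partial_t(v_{\e,M}g_{\e,M})$ lies in $L^2(\Omega_T)$ (using $|\partial_jg|\le C_g'$ with the $L^2$-control on $\partial_tu_{\e,M}$ from the previous step and the $L^p$-control on $\partial_tv_{\e,M}$), and $\partial_tv_{\e,M}(0)=d_v\Delta v_{\init,\e}+v_{\init,\e}g_{\e,M}(0,\cdot)$ is bounded in $H^1$ uniformly in $\e$ since $v_{\init}\in H^3$. Maximal $L^2$-regularity then yields $\partial^2_{tt}v,\partial^3_{tx_ix_j}v\in L^2(\Omega_T)$, and $\partial^2_{tx_i}v\in L^{4-\eta}$ follows by parabolic interpolation. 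For $N\le3$, the Sobolev embedding $W^{2,p}(\Omega)\hookrightarrow L^\infty(\Omega)$ for $p>N/2$ upgrades \eqref{unif eps max reg ve} to the $L^2_tL^\infty_x$ bounds in \eqref{L2Linf bound dt v lap v}; treating the non-divergence form equation for $a_{\e,M}$ as a linear parabolic equation with coefficient $\mu\in[a_0,a_1]$ and $L^\infty$-controlled source, Moser iteration delivers $u_{\e,M}\in L^\infty$, whence $\nabla u\in L^4$ via $\nabla u=(\nabla A-\partial_2A\,\nabla v)/\partial_1A$. The $N=1$ case with $u_{\init}\in W^{1,p}$ follows by a further one-dimensional bootstrap using $H^1\hookrightarrow L^\infty$. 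The main obstacle is the multiplier step: one must absorb both the cross term $\partial_2A\,\partial_tu\,\partial_tv$ and the reaction term (which contains $\partial_tu$ through $\partial_tA$) into the coercive $\|\partial_tu\|_{L^2}^2$ while having only $L^2$-control on $\partial_tv$; this forces the prior $L^4$-bound on $u$ from duality and thereby dictates the careful sequencing of the entire bootstrap.
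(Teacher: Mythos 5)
Your overall architecture (bound $v_{\e,M}$ first, then $u_{\e,M}$, then run an $H^1$-type multiplier estimate, then differentiate the $v$-equation in time, then treat $N\le 3$) matches the paper, and your multiplier $\pa_t A(u_{\e,M},v_{\e,M})$ for \eqref{unif eps max regularity ae} is a legitimate variant of the paper's test function $-\Delta a_{\e,M}$ on the non-divergence form. However, there is a genuine gap at the step you delegate to duality. A Pierre-type duality argument with the dual problem $-\pa_t\phi-B(u_{\e,M},v_{\e,M})\Delta\phi=\theta$ only yields bounds in $L^p(\Omega_T)$ for $p$ in a neighbourhood of $2$ whose size is dictated by the ellipticity ratio $a_1/a_0$ (a Meyers-type restriction): maximal $L^{p'}$-regularity for the dual problem fails for general $p'$ when the coefficient is merely measurable and bounded, and $B(u_{\e,M},v_{\e,M})$ has no $\e,M$-uniform modulus of continuity. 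Moreover duality produces space--time integrability, not the $L^{\infty}([0,T];L^p(\Omega))$ bound required in \eqref{unif eps Linf ve and L4 ue}. Since the full range $p\in[1,\infty)$ is indispensable (the $M$-uniform bound on $g_{\e,M}$ is $|g_{\e,M}|\le C_g(1+\min\{u_{\e,M},M\}+\min\{v_{\e,M},M\})$, so maximal regularity for $v_{\e,M}$ in $L^p$, i.e.\ \eqref{unif eps max reg ve}, needs $u_{\e,M}\in L^p$ for every $p$), this gap propagates through the whole proof. The paper's mechanism is different and is the key idea: an induction \eqref{ineq bootstrap} in which one multiplies the $u$-equation by $u_{\e,M}^{2^q-1}$, uses $\pa_1 A\ge a_0$ from \eqref{hp D2} to absorb the cross term $\pa_2A\,\nabla v_{\e,M}\cdot\nabla u_{\e,M}$, leaving $\int_\Omega|\nabla v_{\e,M}|^{2^q}$, which is controlled by the \emph{previous} induction step via maximal regularity of the $v$-equation; the gain in integrability of $u_{\e,M}$ then feeds back into the $v$-equation. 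You need to replace the duality step by this (or an equivalent) feedback loop.

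A secondary, fixable slip: the bounds $\|\pa_t v_{\e,M}\|_{L^2([0,T];L^\infty(\Omega))}$ and $\|\pa^2_{x_ix_j}v_{\e,M}\|_{L^2([0,T];L^\infty(\Omega))}$ in \eqref{L2Linf bound dt v lap v} do not follow from \eqref{unif eps max reg ve} plus the embedding $W^{2,p}\hookrightarrow L^\infty$, because $\pa_t v_{\e,M},\pa^2_{x_ix_j}v_{\e,M}\in L^p(\Omega_T)$ gives no pointwise-in-space control at fixed time. One must instead use the third-order estimate \eqref{nez} (which you do derive): $\pa^3_{tx_ix_j}v_{\e,M}\in L^2(\Omega_T)$ gives $\pa_t v_{\e,M}\in L^2([0,T];H^2(\Omega))\hookrightarrow L^2([0,T];L^\infty(\Omega))$ for $N\le3$, and the second-derivative bound is then recovered from the equation. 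Similarly, your one-line dispatch of the $N=1$ case and of $\nabla u_{\e,M}\in L^4(\Omega_T)$ (which requires the Gagliardo--Nirenberg interpolation $\|\nabla a_{\e,M}\|_{L^4}^4\lesssim\|D^2a_{\e,M}\|_{L^2}^2\|a_{\e,M}\|_{L^\infty}^2+\|a_{\e,M}\|_{L^\infty}^4$, hence the $L^\infty$ bound on $a_{\e,M}$) would need to be written out, but these are not conceptual obstacles.
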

\begin{remark}
%\textcolor{blue}{
	%It is worth noticing that the hypothesis $v_{\init}\in H^3(\Omega)$ is only used to get the estimate \eqref{nez}, by the maximal regularity, an interpolation inequality and the additional assumption on the initial data $u_{\init}, v_{\init}\in L^{\infty}(\Omega)$ (see \eqref{new246}). Moreover, we observe that it is sufficient to consider $u_{\init}\in L^p(\Omega)$ for any $p\in [1,+\infty)$, to prove the regularity of $u_{\e,M}$ for any $N\ge 1$, that is the second inequality in \eqref{unif eps Linf ve and L4 ue} and the estimate \eqref{unif eps max regularity ae}. Indeed, the assumption $u_{\init}\in L^{\infty}(\Omega)$ is only used to obtain the uniform $L^{\infty}(\Omega_T)$ boundedness of $u_{\e,M}$ with $N\le 3$ in \eqref{L2Linf bound dt v lap v}.
	%
%We observe that the estimates .... and ....... are the only ones where we used the assumption $u_{\init}\in L^{\infty}(\Omega)$. Indeed, it is sufficient to consider $u_{\init}\in L^p(\Omega)$ with $p\in [1,+\infty)$, to prove the regularity of $u_{\e,M}$ for any $N\ge 1$, i.e. the second inequality in \eqref{unif eps Linf ve and L4 ue} and the estimate \eqref{unif eps max regularity ae}. Moreover, in order
%
%It is worth noticing that the hypothesis $v_{\init}\in H^3(\Omega)$ is only used to get the estimate \eqref{nez}.
We observe that the hypothesis on the initial datum $u_{\init}\in L^p(\Omega)$, for any $p\in [1,+\infty)$, is sufficient to prove the regularity of $u_{\e,M}$
% for any $N\ge 1$, i.e. to prove
the second inequality stated in \eqref{unif eps Linf ve and L4 ue}, and  estimate \eqref{unif eps max regularity ae}. Indeed, the assumption $u_{\init}\in L^{\infty}(\Omega)$ is only used to obtain the $L^{\infty}(\Omega_T)$ boundedness of $u_{\e,M}$ if $N\le 3$ (see \eqref{L2Linf bound dt v lap v}). It is also worth noticing that the hypothesis $v_{\init}\in H^3(\Omega)$ is only used to get estimate \eqref{nez}.
\end{remark}

\begin{proof}
  The first estimate in \eqref{unif eps Linf ve and L4 ue}
  follows from \textit{Proposition~\ref{prop 1 eq in b}} (or
  estimate \eqref{ineq Schauder 1}, under the assumption
  $v_{\init} \in L^{\infty}(\Omega)$).

 We then show the $L^2(\Omega_T)$ boundedness of $\nabla v_{\e,M}$, uniformly in $\e,M$, by multiplying by $v_{\e,M}$ the second equation
  of \eqref{system eps (u,v)} and integrating on $\Omega_t$
  (for any $t \in [0,T]$). Thus, we get
  \begin{equation*}
    \f12\int_{\Omega} v_{\e,M}^2(t)dx+d_v\int_{\Omega_t}|\nabla v_{\e,M}|^2dxds
    \le\f12\int_{\Omega}v^2_{\e,M}(0)dx	+C_g
    \int_{\Omega_t}v_{\e,M}^2dxds\le C(T),
  \end{equation*}
  thanks to the assumption on the initial data and estimate \eqref{unif eps Linf ve and L4 ue}. Hence we end up with
  \begin{equation}\label{unif eps estimate L2 grad ve}
    \|\nabla v_{\e,M}\|_{L^2(\Omega_T)}\le C(T).
  \end{equation}

  We now show the second inequality of \eqref{unif eps Linf ve and L4 ue} and inequality \eqref{unif eps max reg ve}, by proving the following estimate for any $q\in \N-\{0\}$ and $i,j=1,\dots, N,$
    \begin{align}\label{ineq bootstrap}
        \|u_{\e,M}\|_{L^{\infty}([0,T]; L^{2^q}(\Omega))}
        +\|\pa_t v_{\e,M}\|_{L^{2^q}(\Omega_T)}&+\|\pa^2_{x_ix_j}v_{\e,M}\|_{L^{2^q}(\Omega_T)}\notag\\
        &+\|\pa_{x_i} v_{\e,M}\|_{L^{2^{q+1}}(\Omega_T)}\le C(T),
    \end{align}
using an induction on $q$.
More precisely, we will get the boundedness of the first term of the l.h.s. of \eqref{ineq bootstrap} by multiplying the first equation of \eqref{system eps (u,v)} by $u_{\e,M}^{2^q-1}$. Then, the boundedness of the remaining terms in \eqref{ineq bootstrap} will be obtained by maximal regularity and interpolation inequalities.

We prove \eqref{ineq bootstrap} with $q=1$.  Denoting (for any fixed
$\e,M>0$)
    $$A_{\e,M}\coloneqq A(u_{\e,M}, v_{\e,M}), \quad f_{M}\coloneqq f_M(u_{\e,M}, v_{\e,M}), \quad g_{\e,M}\coloneqq g_{\e,M}(u_{\e,M}, v_{\e,M}),$$
    we have by Young's inequality
  \begin{align*}
    \f12 \f d{dt}&\int_{\Omega} u_{\e,M}^2dx   \\
    &=-\int_{\Omega}\pa_1 A_{\e,M}|\nabla u_{\e,M}|^2dx
      -\int_{\Omega}\pa_2 A_{\e,M}\nabla u_{\e,M}\cdot\nabla v_{\e,M} dx
      +\int_{\Omega} u_{\e,M}^2f_Mdx\\
    &\le -\f 12\int_{\Omega} \pa_1 A_{\e,M} |\nabla u_{\e,M}|^2dx
      +\f12\int_{\Omega}\f{|\pa_2A_{\e,M}|^2}{\pa_1 A_{\e,M}}|\nabla v_{\e,M}|^2dx
      +C_f\int_{\Omega}u_{\e,M}^2\,dx\\
    &\le \f{a_2^2}{2a_0}\int_{\Omega}|\nabla v_{\e,M}|^2dx+C_f\int_{\Omega}u^2_{\e,M} \,dx,
  \end{align*}
  by \eqref{hp R1}, \eqref{hp D2}. Then Gronwall's lemma and estimate \eqref{unif eps estimate
    L2 grad ve} yield
  \begin{equation}\label{L2 bound ue unif eps M}
    \|u_{\e,M}\|^2_{L^{\infty}([0,T]; L^2(\Omega))}\le  e^{2 C_f T} \,\|u_{\e,M}(0)\|_{L^{2}(\Omega)}^{2}+ \frac{a_2^2}{a_0}
    e^{2 C_f T}  \, \|\nabla v_{\e,M}\|_{L^2(\Omega_T)}^2 \le C(T) .
  \end{equation}
  Moreover, the first inequality in \eqref{unif eps Linf ve and L4 ue}
  and the obtained estimate \eqref{L2 bound ue unif eps M} imply the
  boundedness of $g_{\e,M}$ in $L^{2}(\Omega_T)$ uniformly in $\e, M$,
  so that the reaction term of the equation of $v_{\e,M}$ in
  \eqref{system eps (u,v)} is bounded in $L^{2}(\Omega_T)$.
  Remembering that by assumption $\nabla v_{\init} \in L^2(\Omega)$,
  maximal regularity and an interpolation inequality ensure that for
  all $i,j=1,\dots N$,
  \begin{equation}\label{L2 unif eps max reg ve}
    \|\pa_t v_{\e,M}\|_{L^2(\Omega_T)}+ \|\pa^2_{x_ix_j}v_{\e,M}\|_{L^2(\Omega_T)}+\|\pa_{x_i}v_{\e,M}\|_{L^4(\Omega_T)}\le  C(T),
  \end{equation}
  thus \eqref{ineq bootstrap} is proved for $q=1$.

We now prove \eqref{ineq bootstrap} for $q \in \N - \{0,1\}$, assuming that it holds for $q-1$. In other words, we assume that for all $i,j=1,\dots, N$
\begin{align}\label{ineq bootstrap q-1}
\|u_{\e,M}\|_{L^{\infty}([0,T]; L^{2^{q-1}}(\Omega))}%\le C(T),\label{ineq bootstrap ue}\\
        +\|\pa_t v_{\e,M}\|_{L^{2^{q-1}}(\Omega_T)}&+\|\pa^2_{x_ix_j}v_{\e,M}\|_{L^{2^{q-1}}(\Omega_T)}\notag\\
        &+\|\pa_{x_i} v_{\e,M}\|_{L^{2^{q}}(\Omega_T)}\le C(T).
\end{align}
By multiplying the first equation of \eqref{system eps (u,v)}
  by $u^{2^q-1}_{\e,M}$ and integrating on $\Omega$, we get
  \begin{align}\label{ineq u step q}
    &\f 1{2^q} \f{d}{dt} \int_{\Omega} u_{\e,M}^{2^q} dx=-\int_{\Omega}\nabla \big(A_{\e,M}\big)\nabla (u^{2^q-1}_{\e,M})dx+\int_{\Omega} u_{\e,M}^{2^q}f_M
    dx\notag\\
    & \le -(2^q-1)\int_{\Omega} u_{\e,M}^{2^q-2} \, \Big(\pa_1 A_{\e,M}\nabla u_{\e,M}+\pa_2 A_{\e,M}\nabla v_{\e,M}\Big)\cdot\nabla u_{\e,M} dx  +   C_f\int_{\Omega}u_{\e,M}^{2^q}dx \notag\\
    %	&=-3\int_{\Omega} \pa_1 A_{\e,M}(u^{2}_{\e,M})|\nabla u_{\e,M}|^2dx-3\int_{\Omega}\pa_2 A_{\e,M}(u^{2}_{\e,M})\nabla u_{\e,M}\cdot\nabla v_{\e,M} \,dx\\
	&\le-(2^q-1)\int_{\Omega} \pa_1 A_{\e,M}(u^{2^q-2}_{\e,M})|\nabla u_{\e,M}|^2dx+\f {(2^q-1)}2\int_{\Omega} \pa_1 A_{\e,M}(u^{2^q-2}_{\e,M})|\nabla u_{\e,M}|^2dx\notag\\
   &\quad+\f {(2^q-1)}2\int_{\Omega}\f{|\pa_2 A_{\e,M}|^2}{\pa_1 A_{\e,M}}u_{\e,M}^{2^q-2}|\nabla v_{\e,M}|^2dx +   C_f\int_{\Omega}u_{\e,M}^{2^q}dx\notag\\
   &\le C_q \int_{\Omega} u_{\e,M}^{2^q}dx+ \f 1 {2^{q-1}}\int_{\Omega}|\nabla v_{\e,M}|^{2^q}dx,
	%&\le \bigg( C_q(2^{q-1}-1)\left(\f {a_2^2}{a_0}\right)^{\f1{2^{q-1}-1}}+ C_f\bigg)\, \int_{\Omega} u_{\e,M}^{2^q}dx+C_q\int_{\Omega}|\nabla v_{\e,M}|^{2^q}dx,
 \end{align}
 %with $C_q\coloneqq \f {2^q-1}{2^q} \left(\f{a_2^2}{a_0}\right)^{2^{q-1}}>0$,
 where we used assumption \eqref{hp D2} and H\"older's inequality with coefficients $\left(\f {2^{q}}{2^{q}-2}, \f{2^{q}}2\right)$.
 %$\left(\f {2^{q-1}}{2^{q-1}-1}, 2^{q-1}\right)$.
 Then, Gronwall's lemma, the assumption on $u_{\init} $ %\in L^{2^q}(\Omega)$
 and estimate \eqref{ineq bootstrap q-1} yield
 \begin{equation}\label{ineq ue LinfLq}
   \|u_{\e,M}\|_{L^{\infty}([0,T]; L^{2^q}(\Omega))}\le C(T),
 \end{equation}
 giving the boundedness of the first term in \eqref{ineq
   bootstrap}. Moreover, the first inequality in \eqref{unif eps Linf
   ve and L4 ue} and the obtained estimate \eqref{ineq ue LinfLq}
 imply the boundedness of $g_{\e,M}$ in
 $L^{2^q}(\Omega_T)$ uniformly in $\e,
 M$, so that the right hand side of \eqref{system eps (u,v)} is
 bounded in
 $L^{2^q}(\Omega_T)$.  Thus, maximal regularity (which holds thanks to
 the assumption on
 $v_{\init}$) and an interpolation inequality give \eqref{ineq
   bootstrap}.  This concludes the proof of the second inequality in
 \eqref{unif eps Linf ve and L4 ue} and inequality \eqref{unif eps max
   reg ve}.

 We now prove estimate \eqref{unif eps max regularity ae}. First, multiplying by $-\Delta v_{\e,M}$ the equation satisfied by $v_{\e,M}$, we get
 \begin{align*}
\f 12 \f{d}{dt} \int_{\Omega} |\nabla v_{\e, M}|^2dx &+ d_v  \int_{\Omega} |\Delta v_{\e, M}|^2dx
=  \int_{\Omega} \Delta v_{\e, M}\, v_{\e, M}\, g_{\e, M}dx\\
&\le \frac{d_v}2  \int_{\Omega} |\Delta v_{\e, M}|^2dx + \frac{\|v_{\e,M}\|_{L^{\infty}(\Omega)}^2}{2 d_v}
 \int_{\Omega} g_{\e, M}^2 dx,
 \end{align*}
so that integrating w.r.t time and using the uniform $L^2(\Omega_T)$ boundedness of $g_{\e,M}$, and the assumption $v_{\init} \in H^1(\Omega)$, we end up with
 \begin{equation}\label{LinfL2 unif eps M grad v}
      \|\nabla v_{\e,M}\|_{L^{\infty}([0,T]; L^2(\Omega))}\le C(T).
  \end{equation}
Alternatively, one could use the properties of the heat equation to directly obtain that $\|\nabla v_{\e,M}\|_{L^{\infty}(\Omega_T)} \le C_T$, but we will not use this extra information in the sequel.
system \eqref{system regular eps (a, v)}.
 Inequality \eqref{property U/a} and the second estimate in
  \eqref{unif eps Linf ve and L4 ue} (with $p=4$) imply
  \begin{equation}\label{ct1}
    \|a_{\e,M}\|_{L^{\infty}([0,T]; L^4(\Omega))} \le C(T).
  \end{equation}
  Moreover, thanks to estimates \eqref{unif eps Linf ve and L4 ue}, \eqref{unif eps max reg ve} (with $p=4$) we find from Definition (\ref{def s_M}):
\begin{align} \label{ct2}
  \|s_M\|_{L^4(\Omega_T)}
  \le C(T) .
\end{align}
Multiplying the first equation of \eqref{system regular eps (a, v)} by $- \Delta a_{\e, M}$ and integrating on $\Omega_T$, we see that under the
assumption $a_{\init} \in H^1(\Omega)$ (equivalent to
$u_{\init} \in H^1(\Omega)$),
\begin{align}\label{L2 unif esp M dt ae, Delta ae}
  \|\pa_t a_{\e,M}&\|_{L^2(\Omega_T)}^2+\|\nabla
    a_{\e,M}\|_{L^{\infty}([0,T];L^2(\Omega))}^2+\|\Delta
    a_{\e,M}\|_{L^2(\Omega_T)}^2\notag\\
    &\le C(T)\| a_{\e,M}\, s_M\|_{L^2(\Omega_T)}^2
    \le   C(T) \| a_{\e,M} \|_{L^4(\Omega_T)}^2  \,  \|s_M\|_{L^4(\Omega_T)}^2  \le C(T),
\end{align}
thanks to estimates \eqref{ct1} and \eqref{ct2}.  Using identity
\eqref{c of v}, we see that
\begin{equation*}
  \pa_1 A_{\e,M}\pa_t u_{\e,M}=\pa_t a_{\e,M}-\pa_2 A_{\e,M}\pa_t v_{\e,M},
\end{equation*}
so that using \eqref{hp D2}, estimates \eqref{unif eps max reg ve} with $p=2$ and \eqref{L2 unif esp
  M dt ae, Delta ae}, we end up
with
\begin{equation}\label{L2 unif esp M dt ue}
  \|\pa_t u_{\e,M}\|_{L^2(\Omega_T)}^2\le\f 2{a^2_0}\|\pa_t a_{\e,M}\|_{L^2(\Omega_T )}^2+2\Big(\f{a_2}{a_0}\Big)^2\|\pa_t v_{\e,M}\|_{L^2(\Omega_T)}^2\le C(T).
\end{equation}
Similarly,
\begin{equation}\label{ weak nabla A}
  \nabla (A(u_{\e,M},v_{\e,M}))=\pa_1 A_{\e,M}\nabla u_{\e,M}+\pa_2 A_{\e,M}\nabla v_{\e,M},
\end{equation}
so that using \eqref{hp D2} again, \eqref{LinfL2 unif eps M grad v} and \eqref{L2 unif esp M dt ae, Delta ae}, we get
\begin{align}\label{LinfL2 unif eps M grad ue}
  \|\nabla u_{\e,M}\|^2_{L^{\infty}([0,T];L^2(\Omega))}
  &\le C(T),
\end{align}
Finally, estimates
\eqref{L2 unif esp M dt ae, Delta ae}--\eqref{LinfL2 unif
  eps M grad ue} are collected in \eqref{unif eps max regularity
  ae}.
\medskip

In order to prove estimate \eqref{nez}, we take the time
derivative in the second equation of \eqref{system eps (u,v)}. Then, using \eqref{unif eps Linf ve and L4
  ue}--\eqref{unif eps max
  regularity ae} we have
\begin{align}\label{ddt}
    \| \pa_t(\pa_t v_{\e,M})&-d_v\Delta(\pa_t v_{\e,M}) \|_{L^2(\Omega_T)} \notag\\
    &=
      \| (\pa_t v_{\e,M}) g_{\e,M}+v_{\e,M}\pa_1 g_{\e,M}\pa_t u_{\e,M}
      + v_{\e,M}\pa_2 g_{\e,M}\pa_t v_{\e,M} \|_{L^2(\Omega_T)} \notag\\
    &\le \| \pa_t v_{\e,M} \|_{L^4(\Omega_T)} \, \|g_{\e,M}
      \|_{L^4(\Omega_T)} \notag\\
    &\quad +
      \|  v_{\e,M} \|_{L^{\infty}(\Omega_T)}
      \|\pa_1 g_{\e,M} \|_{L^{\infty}(\Omega_T)}  \| \pa_t u_{\e,M}
      \|_{L^2(\Omega_T)} \notag\\
    &\quad +
      \|  v_{\e,M} \|_{L^{\infty}(\Omega_T)}
      \|\pa_2 g_{\e,M} \|_{L^{\infty}(\Omega_T)} \| \pa_t v_{\e,M}
      \|_{L^2(\Omega_T)}\notag\\
    &\le C(T).
\end{align}
Therefore, we apply the maximal regularity, using the assumption $v_{\init}\in H^3(\Omega)$ (and
$u_{\init}, v_{\init} \in L^\infty(\Omega)$) and an interpolation inequality, using estimate \eqref{unif eps max reg ve}, to get for all $i,j=1,..,N$ and for $\eta>0$
 \begin{equation} \label{new246}
   \|\pa^2_{tt} v_{\e,M}\|_{L^2(\Omega_T)}+\|\pa^3_{t x_i x_j}\,v_{\e,M}\|_{L^2(\Omega_T)}+ \|\pa^2_{t x_i} v_{\e,M}\|_{L^{4-\eta}(\Omega_T)}\le C(T),
\end{equation}
that is estimate \eqref{nez}.
\medskip

 We now consider the case when $N\le 3$, so that we can use
 the continuous injection
$ H^2(\Omega)\hookrightarrow L^{\infty}(\Omega)$. We see
that thanks to estimates (\ref{L2 unif esp M dt ae, Delta ae}) and (\ref{new246}),
\begin{equation}\label{L2Linf eps unif dt ve}
 \|A(u_{\e,M}, v_{\e,M})\|_{ L^2([0,T];\,L^{\infty}(\Omega))} +
  \|\pa_t v_{\e,M}\|_{ L^2([0,T];\,L^{\infty}(\Omega))}\le C(T),
\end{equation}
which implies, thanks to Definition \ref{def s_M},
\begin{equation*}
  \int_{0}^{T} \sup_{x\in\Omega} |s_M(t,x)|\,dt\le C(T).
\end{equation*}
Thanks to estimate \eqref{Linf bound b} in \textit{Proposition \ref{prop 1
  eq in b}} applied to the first equation of system (\ref{system regular eps (a, v)}), and the assumption
$a_{\init} \in (L^{\infty} \cap H^1) (\Omega)$ (equivalent to
$u_{\init} \in (L^{\infty} \cap H^1) (\Omega)$), we get
\begin{equation}\label{Linf unif eps ae}
  \|a_{\e,M}\|_{L^{\infty}(\Omega_T)}\le C(T),
\end{equation}
and finally
\begin{equation}\label{Linf unif eps ae u}
  \|u_{\e,M}\|_{L^{\infty}(\Omega_T)}\le C(T).
\end{equation}
Using Gagliardo-Nirenberg's inequality (with the constant in the inequality denoted by $C_{GN}$), we see that for all
$i=1,\dots, N,$ (when $N\le 3$)
\begin{equation*}
  \|\pa_{x_i} a_{\e,M}\|^4_{L^4(\Omega)}
  \le C_{GN} \sup_{k,l=1,\dots ,N} \|\pa^2_{x_kx_l} a_{\e,M}\|_{L^2(\Omega)}^{2}\|a_{\e,M}\|_{L^{\infty}(\Omega)}^{2}+C_{GN} \|a_{\e,M}\|_{L^{\infty}(\Omega)}^4.
\end{equation*}
Then, by integrating in time over $(0,T)$, and using estimate
\eqref{unif eps max regularity ae},  we obtain
\begin{equation}
  \|\pa_{x_i} a_{\e,M}\|_{L^4(\Omega_T)}
  \le C(T) .
\end{equation}

Using now \eqref{hp D2}, \eqref{unif eps max reg ve} with $p=4$, \eqref{ weak
  nabla A} and the above inequality, we end up
for all $i=1,\dots, N$ (and $N\le 3$), with
\begin{align*}
  \|\pa_{x_i} u_{\e,M}\|_{L^4(\Omega_T)}
  &\le C(T)\big(\|\pa_{x_i}  a_{\e,M}\|_{L^{4}(\Omega_T)}+\|\pa_{x_i}  v_{\e,M}\|_{L^{4}(\Omega_T)}\big)\le C(T).
\end{align*}

Finally, we observe that (still when $N \le 3$)
\begin{equation*}
  \| g_{\e,M}(u_{\e,M}, v_{\e,M}) \|_{L^{\infty}(\Omega_T)}
  \le C_g  \| 1+  u_{\e,M} + v_{\e,M} \|_{L^{\infty}(\Omega_T)} \le C(T),
\end{equation*}
thanks to \eqref{Linf unif eps ae u}.  Then, using \eqref{L2Linf eps unif dt ve}, we see that for all
$i,j=1,\dots ,N$,
\begin{equation}\label{est248}
 \| \pa^2_{x_ix_j}  v_{\e,M} \|_{L^2([0,T] ; L^{\infty}(\Omega))} \le C(T).
\end{equation}
Collecting estimates \eqref{L2Linf eps unif dt
  ve}--\eqref{est248}, we obtain \eqref{L2Linf bound dt v lap
  v}.

We finally consider the case when $N=1$. Denoting for simplicity $\mu_{\e,M}=\mu(a_{\e,M}, v_{\e,M})$ and $s_{\e,M}=s_M(a_{\e,M}, v_{\e,M}, \pa_t v_{\e,M} )$, we take $p\ge 1$ and compute (remembering that $a_{\e,M}$ satisfies Neumann's boundary condition)
\begin{align*}
\frac{d}{dt} \int_{\Omega} |\pa_x a_{\e,M}|^{2p} dx  &=  2p  \int_{\Omega} |\pa_x a_{\e,M}|^{2p - 1} \pa_x \big(  \mu_{\e,M} \pa^2_{xx} a_{\e,M} +  a_{\e,M} s_{\e,M}\big)dx \\
& =- 2p \int_{\Omega}  \pa_x \left( |\pa_x a_{\e,M}|^{2p - 1} \right)
   \big( \mu_{\e,M} \pa^2_{xx} a_{\e,M} + a_{\e,M}\, s_{\e,M} \big) dx \\
 &=- 2p(2p-1) \int_{\Omega}   |\pa_x a_{\e,M}|^{2p - 2} \pa^2_{xx} a_{\e,M}
 \big( \mu_{\e,M} \pa^2_{xx} a_{\e,M} + a_{\e,M} s_{\e,M} \big) dx .
  \end{align*}
As a consequence, it holds by \eqref{hp mu}
\begin{align*}
	\frac{d}{dt} \int_{\Omega} |\pa_x a_{\e,M}|^{2p}  dx  &+ 2p(2p-1)a_0 \int_{\Omega}   |\pa_x a_{\e,M}|^{2p - 2} |\pa^2_{xx} a_{\e,M}|^2 dx \\
&\le 2p(2p-1) \int_{\Omega} |\pa_x a_{\e,M}|^{2p - 2} |\pa_{xx} a_{\e,M}|  a_{\e,M} |s_{\e,M} |  dx .
\end{align*}
Thus, by Young's inequality, we end up with
\begin{equation}\label{estimate dt dx a}
	\frac{d}{dt} \int_{\Omega} |\pa_x a_{\e,M}|^{2p}  dx \le \f{p(2p-1)}{a_0} \int_{\Omega} |\pa_x a_{\e,M}|^{2p - 2} \, |a_{\e,M}|^2\, |s_{\e,M} |^2  dx .
\end{equation}
Recalling the uniform bound (see estimates \eqref{hp mu}, \eqref{property U/a}), definition (\ref{def s_M}) and Assumption A)
$$  |s_M(a_{\e,M}, v_{\e,M}, \pa_t v_{\e,M} )| \le \f 1{a_0} \left(a_1\,C_f\, (1+ |U(a_{\e,M}, v_{\e,M})| + v_{\e,M}) + a_3 |\pa_t v_{\e,M}| \right) , $$
we use the estimates  \eqref{unif eps Linf ve and L4 ue} and \eqref{Linf unif eps ae u} to get
$$  |s_M(a_{\e,M}, v_{\e,M}, \pa_t v_{\e,M} )|^2 \le C(T)\,( 1+   |\pa_t v_{\e,M}|^2). $$
Therefore using \eqref{Linf unif eps ae}, the right-hand side of \eqref{estimate dt dx a} is estimated as
\[
	\frac{d}{dt} \int_{\Omega} |\pa_x a_{\e,M}|^{2p} \, dx \le p\,(2p-1) \, C(T)\|a_{\e,M}\|_{L^\infty(\Omega)}^2\int_{\Omega} |\pa_x a_{\e,M}|^{2p - 2} \, ( 1+   |\pa_t v_{\e,M}|^2) \, dx.
\]
Finally, using Hölder's inequality with coefficients $\left(\f{2p}{2p-2}, \f{2p}2\right)$ for any $p\ge1$, we get
\[
\frac{d}{dt}\|\pa_x a_{\e,M}\|^{2p}_{L^{2p}(\Omega)} \, dx \le C_{p}(T)\|a_{\e,M}\|_{L^\infty(\Omega)}^2\left(1+\|\pa_tv_{\e,M}\|_{L^{2p}(\Omega)}^{2}\right)\|a_{\e,M}\|_{L^{2p}(\Omega)}^{2p-2}.
\]
 Thus, we obtain estimate (\ref{nest1}) by Duhamel's formula, recalling that we assumed  that $u_{in} \in W^{1,p}(\Omega)$ for all $p\in [1,\infty)$, and using estimates \eqref{unif eps max reg ve}.
\end{proof}

We now conclude the proof of \textit{Theorem \ref{thm existence (u,v) + regularity}}.

By the $\e,M-$uniform estimates shown in
\textit{Lemma~\ref{lll}}, we can extract subsequences from $u_{\e,M}$, $v_{\e,M}$ (still denoted by $u_{\e,M}$, $v_{\e,M}$) such that for
some $u\in L^{\infty}([0,T]; L^p(\Omega))$, for all $p\in [1,+\infty)$, and
$v\in L^{\infty}(\Omega)$,
\begin{equation}\label{limnit a.e. ue ve}
  u_{\e,M}\rightarrow u,\qquad v_{\e,M}\rightarrow v,\qquad\text{ a.e. in $\Omega_T$ \quad as }\e\rightarrow 0,\, M\rightarrow +\infty,
\end{equation}
and for all $ p\in [1, \infty)$
\begin{align}\label{limit dt ue dt and grad ve}
    \pa_t u_{\e,M}\rightharpoonup\pa_t u,\qquad\qquad\quad
    \qquad&\text{weakly in $L^2(\Omega_T),$}\notag\\
    \pa_t v_{\e,M}\rightharpoonup\pa_t v,\,\,
    \Delta v_{\e,M}\rightharpoonup\Delta v,\,\,
    \nabla v_{\e,M}\rightharpoonup\nabla v,\quad&\text{weakly in } L^p(\Omega_T).
\end{align}
Moreover, by Assumption~A and estimates \eqref{unif eps
  Linf ve and L4 ue}, \eqref{unif eps max regularity ae}, the
convergence \eqref{limnit a.e. ue ve} ensures that
\begin{equation}\label{conv Delta A eps wL2}
  \Delta A\big(u_{\e,M},v_{\e,M}\big)\rightharpoonup\Delta A(u,v),\qquad \text{weakly in $L^2(\Omega_T)$}.
\end{equation}

Now, we take the $D'(\Omega_T)$ limit as
$\e\rightarrow 0,\,M\rightarrow +\infty$, in \eqref{system eps (u,v)},
\eqref{def initial data ue ve}.  and observe that
$u_{\e,M}f_M(u_{\e,M}, v_{\e,M})$ and
$v_{\e,M}g_{\e,M}(u_{\e,M}, v_{\e,M})$ converge towards $u\,f(u,v)$
and $v\,g(u,v)$ strongly in $L^1(\Omega_T)$, thanks to assumption
\eqref{hp R1} and the estimates \eqref{unif eps Linf ve and L4 ue}.
\medskip

Then, using the convergences obtained above, all the terms in the first
two equations of \eqref{system eps (u,v)} converge in
$D'(\Omega_T).$ We conclude by taking the limit in the boundary
conditions of \eqref{system eps (u,v)}, using the continuity of
the trace operator and the weak convergence of
$\Delta v_{\e,M},\, \Delta A(u_{\e,M}, v_{\e,M})$ in
\eqref{limit dt ue dt and grad ve}, \eqref{conv Delta A
  eps wL2}.  Finally, using the lower semi-continuity property of the
$L^p(\Omega_T)$ norm for $p\in (1,+\infty]$, we conclude that $u,v$
satisfy the estimates stated in lines i) and ii) of  \textit{Theorem~\ref{thm
    existence (u,v) + regularity}}.

\section{Stability and uniqueness}\label{sec:uniqueness}

In this section, we present the proof of the two stability results
(\textit{Theorem \ref{thm uniqueness}} and \textit{Theorem
  \ref{nuq}}).

For a better readability, we first introduce for \(i=1,2\) the notations
\begin{equation*}
  \begin{split} A_i
    &\coloneqq A(u_i,v_i),\qquad f_i\coloneqq
      f(u_i,v_i),\qquad g_i\coloneqq
      g(u_i,v_i),\\
    &\partial_1
      A_i\coloneqq \partial_1 A(u_i,v_i),\qquad \partial_2 A_i\coloneqq
      \partial_2 A(u_i,v_i)
  \end{split}
\end{equation*}
and
\begin{equation*}
  \mathcal{N}_H := \sup_{i,j=1,2} \|\pa^2_{ij} A \|_{\infty}.
\end{equation*} Hereafter, all constants $C$ are strictly positive
and may change from line to line.  Moreover, if foreseen, the
dependency of parameter to be chosen thereafter is denoted as index of
the constant.

\begin{proof}[Proof of Theorem~\ref{thm uniqueness}]
 We compute the equations satisfied by $u_1-u_2$ and $v_1-v_2$
  and we multiply by $u_1-u_2$ and $\lambda(v_1-v_2),$ respectively,
  where the parameter $\lambda>0$ will be chosen later. Then, we
  integrate over $\Omega$ and we add the obtained formulations to get
  \begin{align}\label{def Irea Idiff uniqueness}
    \f12
    &\f d{dt}\Big(\int_{\Omega}|u_1-u_2|^2dx+\lambda\int_{\Omega}
    |v_1-v_2|^2dx\Big)\notag\\
    &=-\int_{\Omega}\big(\partial_1 A_1\nabla u_1+\partial_2 A_1\nabla v_1\big)\cdot\nabla (u_1-u_2)dx\notag\\
    &\quad+\int_{\Omega}
      \Big(\partial_1A_2\nabla u_2+\partial_2 A_2\nabla v_2\Big)\cdot \nabla(u_1-u_2)dx-d_v\lambda\int_{\Omega}|\nabla(v_1-v_2)|^2dx\notag\\
    &\quad+\int_{\Omega}\big(u_1f_1-u_2f_2\big)(u_1-u_2) dx+\lambda\int_{\Omega}\big(v_1g_1-v_2g_2\big)(v_1-v_2) dx\notag\\
    &\eqqcolon I_{diff}+I_{rea}.
  \end{align}
  The reaction part is then estimated as below
  \begin{align}\label{reaction estimate prov}
    I_{rea}
    &=\int_{\Omega}f_1|u_1-u_2|^2dx+\int_{\Omega}u_2(f_1-f_2)(u_1-u_2)dx\notag\\
    &\quad+\lambda\int_{\Omega}g_1|v_1-v_2|^2dx+\lambda\int_{\Omega}v_2(g_1-g_2)(v_1-v_2)dx\notag\\
    &\le \max(C_{f}, C_{g})  \, \Big(\int_{\Omega}|u_1-u_2|^2dx+\lambda\int_{\Omega}|v_1-v_2|^2dx\Big)\notag\\
   &\quad + C_f' \, \|u_2\|_{L^{\infty}} \int_{\Omega}
     \bigg[|u_1-u_2|^2 + |u_1-u_2|\, |v_1-v_2| \bigg]\,  dx \notag\\
   &\quad
+ \lambda  \,  C_g' \, \|v_2\|_{L^{\infty}}  \int_{\Omega}
     \bigg[|v_1-v_2|^2 + |u_1-u_2|\, |v_1-v_2| \bigg]\,  dx ,
  \end{align}
  thanks to \eqref{hp R1}.  Using Young's inequality, we get from
  \eqref{reaction estimate prov}
  \begin{align}\label{final react estimate}
    I_{rea}
    &\le	C_{\lambda}(1 + \|u_2\|_{L^{\infty}}+  \|v_2\|_{L^{\infty}})
    \left(\|u_1-u_2\|_{L^2(\Omega)}^2+\lambda\|v_1-v_2\|_{L^2(\Omega)}^2\right).
  \end{align}

  Concerning the diffusion part, it holds by Young's inequality and \eqref{hp D2},
  \begin{align}\label{est Idiff uniq}
	I_{diff}&=-\int_{\Omega}\partial_1 A_1|\nabla (u_1-u_2)|^2dx-d_v\lambda\int_{\Omega}|\nabla (v_1-v_2)|^2dx\notag\\
	&\quad-\int_{\Omega}\partial_1 (A_1-A_2)\nabla u_2\cdot \nabla(u_1-u_2)dx
	-\int_{\Omega}\partial_2 A_1\nabla(v_1-v_2)\cdot\nabla(u_1-u_2)dx\notag\\
	&\quad-\int_{\Omega}\partial_2\big(A_1-A_2\big)\nabla v_2\cdot\nabla (u_1-u_2)dx\notag\\
	&\le -\f{a_0}4\int_{\Omega}|\nabla (u_1-u_2)|^2dx-\big(d_v\lambda-\f{a_2^2}{a_0}\big)\int_{\Omega}|\nabla (v_1-v_2)|^2dx\notag\\
	&\quad + \int_{\Omega}\f{|\nabla u_2|^2}{\partial_1 A_1}|\partial_1 (A_1-A_2)|^2dx + \int_{\Omega}\f{|\nabla v_2|^2}{\partial_1 A_1}|\partial_2 (A_1-A_2)|^2dx.
  \end{align}
  We now focus on the last two integrals in \eqref{est Idiff uniq}. The second one is estimated
  as follows
  \begin{align}\label{est grad v N=1,2}
	\int_{\Omega}&\f{|\nabla v_2|^2}{\partial_1A_1}|\partial_2 (A_1-A_2)|^2dx\notag\\
	&\le\f2{a_0}\, \mathcal{N}_H\, \|\nabla v_2\|_{L^{\infty}(\Omega)}^2\int_{\Omega}\Big(|u_1-u_2|^2+ |v_1-v_2|^2\Big)dx\notag\\
	&\le C_\lambda\|\nabla v_2\|_{L^{\infty}(\Omega)}^2%\notag\\
	%&\hspace{6.5cm}
	\int_{\Omega}\Big(|u_1-u_2|^2+\lambda|v_1-v_2|^2\Big)dx.
  \end{align}

  In order to estimate the first integral, we use \eqref{hp D2} and
  get
  \begin{align}\label{first diff term N=2}
	\int_{\Omega}&\f{|\nabla u_2|^2}{\partial_1 A_1}|\partial_1 (A_1-A_2)|^2dx \notag\\%+ \int_{\Omega}\f{|\nabla v_2|^2}{\partial_1 A_1}|\partial_2 (A_1-A_2)|^2dx\\
	&\le\f2{a_0} \|\nabla u_2\|_{L^4(\Omega)}^2  \,  \|\partial_1 (A_1-A_2)\|_{L^4(\Omega)}^2 \notag\\
	&\le \f2{a_0}\mathcal{N}^2_H \|\nabla u_2\|_{L^4(\Omega)}^2\Big(\|u_1-u_2\|_{L^4(\Omega)}^2+\|v_1-v_2\|_{L^4(\Omega)}^2\Big).
  \end{align}
  Then, Gagliardo-Nirenberg's inequality in dimension 2
  \cite{Nirenberg} allows us to estimate the $L^4$ norm of $(u_1-u_2)$
  (resp. $(v_1-v_2)$) in terms of the $L^2$ norm of $(u_1-u_2)$
  (resp. $(v_1-v_2)$) and $\nabla (u_1-u_2)$ (respectively
  $\nabla (v_1-v_2)$), as follows
  \begin{align*}%\label{GN not N=3}
    &\|\nabla u_2\|_{L^4(\Omega)}^2\|u_1-u_2\|^2_{L^4(\Omega)}\notag\\
    &\le C_{GN}\|\nabla u_2\|_{L^4(\Omega)}^2\big(\|\nabla (u_1-u_2)\|_{L^2(\Omega)}\|u_1-u_2\|_{L^2(\Omega)}+\|u_1-u_2\|^2_{L^2(\Omega)}\big)\notag\\
    &\le \delta C_{GN}\|\nabla (u_1-u_2)\|_{L^2(\Omega)}^2+\f{C_{GN}}{\delta}\|\nabla u_2\|_{L^4(\Omega)}^4\|u_1-u_2\|_{L^2(\Omega)}^2\notag\\
    &\quad+C_{GN}\|\nabla u_2\|_{L^4(\Omega)}^2\|u_1-u_2\|_{L^2(\Omega)}^2\notag\\
    &\le\delta C_{GN}\|\nabla (u_1-u_2)\|_{L^2(\Omega)}^2
      + C_\delta (1+\|\nabla u_2\|^4_{L^4}) \|u_1-u_2\|_{L^2(\Omega)}^2,
  \end{align*}
  where we denote by $C_{GN}$ the constant involved in the
  Gagliardo-Nirenberg inequality, $\delta>0$ a parameter to be chosen
  later, (and $C_\delta$ a constant depending on $\delta$).
  Similarly, for the second term in \eqref{first diff term N=2} it
  holds
  \begin{align*}
    \|\nabla u_2\|_{L^4(\Omega)}^2\|v_1-v_2\|^2_{L^4(\Omega)}
    \le\delta C_{GN}\|\nabla (v_1-v_2)\|_{L^2(\Omega)}^2
    +C_\delta (1+ \|\nabla u_2\|^4_{L^4})\|v_1-v_2\|_{L^2(\Omega)}^2,
  \end{align*}
  so that \eqref{first diff term N=2} is estimated as
  \begin{align}\label{final est 1st term diff N=2}
	\int_{\Omega}\f{|\nabla u_2|^2}{\partial_1 A_1}|\partial_1 (A_1-A_2)|^2dx&\le \f {2\delta}{a_0}C_{GN}\mathcal{N}^2_H\Big(\|\nabla (u_1-u_2)\|_{L^2(\Omega)}^2+\|\nabla (v_1-v_2)\|_{L^2(\Omega)}^2\Big)\notag\\
	&+C_{\delta,\lambda} (1+\|\nabla u_2\|^4_{L^4}) \big(\|u_1-u_2\|_{L^2(\Omega)}^2+\lambda\|v_1-v_2\|_{L^2(\Omega)}^2\big).
  \end{align}

  Therefore, gathering \eqref{est grad v N=1,2} and \eqref{final est
    1st term diff N=2} into \eqref{est Idiff uniq}, the term $I_{diff}$ is estimated as, renaming the constants,
  \begin{align}\label{final est Idiff N=2}
	I_{diff}&\le -\Big(\f{a_0}4-\f{2\delta}{a_0}C_{GN}\mathcal{N}^2_H\Big)\|\nabla (u_1-u_2)\|_{L^2(\Omega)}^2\notag\\
	&\quad-\Big(d_v\lambda-\f{a_2^2}{a_0}-\f{2\delta}{a_0}C_{GN}\mathcal{N}^2_H\Big)\|\nabla (v_1-v_2)\|_{L^2(\Omega)}^2\notag\\
	&\quad+C_{\delta,\lambda} (1 + \|\nabla u_2\|^4_{L^4} + \|\nabla v_2\|^2_{L^{\infty}})
	\big(\|u_1-u_2\|_{L^2(\Omega)}^2+\lambda\|v_1-v_2\|_{L^2(\Omega)}^2\big).
  \end{align}

  Finally, plugging \eqref{final react estimate},
  \eqref{final est Idiff N=2} into \eqref{def Irea Idiff
    uniqueness}, we end up with
  \begin{equation}\label{final est N=2}
    \begin{split}
      &\f12
      \f d{dt}\Big(\|u_1-u_2\|_{L^2(\Omega)}^2+\lambda\|v_1-v_2\|_{L^2(\Omega)}^2\Big)\\
      &\le -\Big(\f{a_0}4-\f{2\delta}{a_0}C_{GN}\mathcal{N}^2_H\Big)\|\nabla (u_1-u_2)\|_{L^2(\Omega)}^2\\
      &\quad-\Big(d_v\lambda-\f{a_2^2}{a_0}-\f{2\delta}{a_0}C_{GN}\mathcal{N}^2_H\Big)\|\nabla (v_1-v_2)\|_{L^2(\Omega)}^2\\
      &\quad+ C_{\delta, \lambda}\,
        (1+\|\nabla u_2\|^4_{L^4} + \|\nabla v_2\|^2_{L^{\infty}}+\|
        u_2\|_{L^{\infty}}^2 + \| v_2\|_{L^{\infty}}^2)\\
      &\quad\qquad
      \big(\|u_1-u_2\|_{L^2(\Omega)}^2+\lambda\|v_1-v_2\|_{L^2(\Omega)}^2\big).
    \end{split}
  \end{equation}
  Now, we pick $\delta\in (0,\f{a_0^2}{8C_{GN}\mathcal{N}^2_H})$ and
  $\lambda \in (\f{a_2^2}{a_0d_v}+\f{a_0}{4d_v}, \infty)$, and obtain, for some $c>0$,
  \begin{align*}%\label{final space estimate N=2}
    \f d{dt}\big(\|u_1-u_2\|_{L^2(\Omega)}^2+\lambda\|v_1
    -v_2\|_{L^2(\Omega)}^2\big)&%\notag\\
    %&
    + c \,\bigg(\|\nabla(u_1-u_2)\|_{L^2(\Omega)}^2+\|\nabla(v_1-v_2)\|_{L^2(\Omega)}^2 \bigg)\notag\\
    &\le C\,\big(\|u_1-u_2\|_{L^2(\Omega)}^2+\lambda\|v_1-v_2\|_{L^2(\Omega)}^2\big),
  \end{align*}
  where $C$ depends on
 $\|u_2\|_{L^{\infty}(\Omega_T)}$,
  $\|\nabla u_2\|_{L^{\infty}([0,T]; L^4(\Omega))}$,
  $\|v_2\|_{L^{\infty}(\Omega_T)}$,
  $\|\nabla v_2\|_{L^{\infty}(\Omega_T)}$.

  Finally, we get \eqref{uniqueness-ineq-prop} thanks to Gronwall's lemma.
\end{proof}

We now present the proof of stability stated in \textit{Theorem
  \ref{nuq}} by looking at the evolution in time of the
\((H^1)'(\Omega)\) norm.

\begin{proof}[Proof of Theorem~\ref{nuq}]
  Let consider $\phi=\phi(t,x)$ as the unique solution to the Neumann problem, for all $t\in (0,T)$,
  \begin{equation}\label{elliptic problem phi}
    - \Delta \phi = (u_1-u_2) -  (u_1-u_2)_{\Omega} \,\, {\hbox{ in}}\,\Omega,\qquad \nabla \phi \cdot \sigma = 0 \,\,{\hbox{ on}}\,\pa\Omega, \qquad  (\phi)_{\Omega} =0.
  \end{equation}
  and $\psi=\psi(t,x)$ as the unique solution to the Neumann problem, for all $t\in (0,T)$,
  \begin{equation}\label{elliptic problem psi}
    - \Delta \psi = (v_1 - v_2) -  (v_1-v_2)_{\Omega} \,\, {\hbox{ in}}\,\Omega,\qquad \nabla \psi \cdot \sigma = 0 \,\, {\hbox{ on}}\,\pa\Omega, \qquad  (\psi)_{\Omega} =0.
  \end{equation}
  We compute
  \begin{align}
    \f12\f d{dt}\int_{\Omega} |\nabla \phi|^2dx
    &=\f 12\f d{dt}\int_{\Omega}\left(-\Delta\phi +(u_1-u_2)_{\Omega}\right)dx=\f 12\f d{dt}\int_{\Omega}\phi(u_1-u_2)dx\notag\\
    &=\f 12\int_{\Omega}\f {d}{dt}\phi \left(-\Delta\phi +(u_1-u_2)_{\Omega}\right)dx+\f 12\int_{\Omega} \phi \f d{dt}(u_1-u_2)dx\notag\\
    &=-\f 12\int_{\Omega} \left(\Delta\f d{dt} \phi \right)\phi dx+\f 12\int_{\Omega} \phi \f d{dt}(u_1-u_2)dx=\int_{\Omega} \phi \f d{dt} (u_1-u_2)dx\notag\\
    &= \int_{\Omega} \phi  \Delta (A_1-A_2) dx + \int_{\Omega}\phi (u_1f_1 - u_2 f_2  )dx\eqqcolon I_{diff}+I_{rea}.\label{hm1}
  \end{align}
  Using the first equation in \eqref{elliptic problem phi}, the assumption \eqref{hp D2} and Young's inequality, we estimate
  \begin{align}\label{est Idiff u weak uniq}
    I_{diff}&=\int_{\Omega}\left(A(u_1,v_1)-A(u_2,v_1)\right)\left(-(u_1-u_2)+(u_1-u_2)_{\Omega}\right)dx\notag\\
            &\quad+\int_{\Omega}\left(A(u_2,v_1)-A(u_2,v_2)\right)\left(-(u_1-u_2)+(u_1-u_2)_{\Omega}\right)dx\notag\\
            &\le -a_0\int_{\Omega}(u_1-u_2)^2dx+a_1|\Omega|(u_1-u_2)^2_{\Omega}\notag\\
            &\quad + a_2 \int_{\Omega} |u_1-u_2| |v_1-v_2|dx+\f{a_2}2|\Omega|\left((u_1-u_2)^2_{\Omega}+(v_1-v_2)^2_{\Omega}\right)\notag\\%\label{Young 1}\\
            &\le -a_0\left(1-\f1{8}\right)\int_{\Omega}(u_1-u_2)^2dx+(a_1+a_2)|\Omega|(u_1-u_2)^2_{\Omega}\notag\\
            &\quad + \f{2a^2_2}{a_0} \int_{\Omega} (v_1-v_2)^2dx+ a_2|\Omega|(v_1-v_2)^2_{\Omega},
  \end{align}
  and by \eqref{hp R1}, we estimate for any $M >0$ to be determined later
  \begin{align}
    I_{rea}&=\int_{\Omega}\phi f_1(u_1-u_2)dx+\int_{\Omega}\phi u_2(f(u_1,v_1)-f(u_1,v_2)+f(u_1,v_2)-f(u_2,v_2))dx\notag \\
           &\le \f{a_0}{8}\int_{\Omega}(u_1-u_2)^2dx + \f{4}{a_0} C^2_f\left(1+\|u_1\|^2_{L^{\infty}(\Omega)}+\|v_1\|^2_{L^{\infty}(\Omega)}\right)\int_{\Omega} \phi^2dx \notag\\
           &\quad+ C_f'\|u_2\|_{L^{\infty}(\Omega)}\int_{\Omega}|\phi|\left(|u_1-u_2|+|v_1-v_2|\right)dx\notag\\
           &\le \f{a_0}{4}\int_{\Omega}(u_1-u_2)^2dx + \f{d_v M}{8}\int_{\Omega}(v_1-v_2)^2dx+ C_1\int_{\Omega}\phi^2dx,\label{est Irea u weak uniq}
  \end{align}
  with $C_1$ depending on $a_0$, $d_v$, $M$, $|\Omega|$, $C_f$, $C_f'$, $\|u_i\|_{L^{\infty}(\Omega)}, \|v_i\|_{L^{\infty}(\Omega)}$, $i=1,2$.
  Then, we put \eqref{est Idiff u weak uniq} and \eqref{est Irea u weak uniq} into \eqref{hm1}
  to get
  \begin{align}\label{evolution phi}
    \frac 12 \frac{d}{dt} \int_{\Omega} |\nabla \phi|^2dx
    &\le -\f {5a_0} 8  \int_{\Omega}(u_1-u_2)^2dx + \left(\f{d_vM}{8}+\f{2a_2^2}{a_0}\right)\int_{\Omega}(v_1-v_2)^2dx\notag\\
    &\quad +C_1\int_{\Omega}\phi^2dx+(a_1+a_2)|\Omega|(u_1-u_2)_{\Omega}^2+a_2|\Omega|(v_1-v_2)_{\Omega}^2.
  \end{align}
  Similarly as in \eqref{hm1}, using the first equation in \eqref{elliptic problem psi}, we have for any $M>0$
  \begin{align*}
    \f M2&\f d{dt}\int_{\Omega}|\nabla \psi|^2dx\\
         &=d_vM\int_{\Omega}(v_1-v_2)\left(-(v_1-v_2)+(v_1-v_2)_{\Omega}\right)dx
           +\int_{\Omega}M\psi(v_1g_1-v_2g_2)dx\\
         &\eqqcolon II_{diff}+II_{rea},
  \end{align*}
  with
  \begin{equation*}
    II_{diff}\le -d_vM\left(1-\f1{8}\right)\int_{\Omega} (v_1-v_2)^2dx+ 2d_vM |\Omega|(v_1-v_2)_{\Omega}^2,
  \end{equation*}
  and
  \begin{equation*}
    II_{rea}\le \f{a_0}{8}\int_{\Omega} (u_1-u_2)^2dx + \f{d_vM}{4}\int_{\Omega}(v_1-v_2)^2dx+C_2\int_{\Omega}\psi^2dx,
  \end{equation*}
  with $C_2$ depending on $a_0$, $d_v$, $M$, $|\Omega|$, $C_g$, $C_g'$,
  $\|u_i\|_{L^{\infty}(\Omega)}, \|v_i\|_{L^{\infty}(\Omega)}$, $i=1,2$.
  Therefore, we end up with
  \begin{align}\label{evolution psi}
    \f M2 \f d{dt}\int_{\Omega}|\nabla \psi|^2dx
    &\le -\f 5 8 d_vM\int_{\Omega}(v_1-v_2)^2dx+ \f{a_0}{8}\int_{\Omega}(u_1-u_2)^2dx\notag\\
    &\quad+ C_2\int_{\Omega}\psi^2dx+2d_v M|\Omega|(v_1-v_2)_{\Omega}^2.
  \end{align}
  By gathering \eqref{evolution phi}, \eqref{evolution psi}, we obtain
  \begin{multline}\label{evolution phi+psi}
    \frac 12 \frac{d}{dt}
    \int_{\Omega}\left( |\nabla \phi|^2 +M |\nabla
      \psi|^2\right)dx\\
    \le -\f{a_0}2\int_{\Omega}(u_1-u_2)^2dx - \left(\f{d_vM}{2}-\f{2a_2^2}{a_0}\right)\int_{\Omega}(v_1-v_2)^2dx
    + (C_1+C_2)\int_{\Omega}(\phi^2+\psi^2)dx\\+(a_1+a_2)|\Omega|(u_1-u_2)^2_{\Omega}+(a_2+d_vM)|\Omega|(v_1-v_2)^2_{\Omega}.
  \end{multline}

  Now, we analyse the evolution in time of $(u_1-u_2)_{\Omega}$ and $(v_1-v_2)_{\Omega}$, respectively.
  \begin{align}\label{def J1 J2}
    \f 12 \f d{dt}(u_1-u_2)_{\Omega}^2
    &=
      \f 1{|\Omega|^2} \int_{\Omega}(u_1-u_2)dx\int_{\Omega}(u_1f_1-u_2f_2)dx\notag\\
    &=\f 1{|\Omega|^2}\int_{\Omega}(u_1-u_2)dx\int_{\Omega}f_1(u_1-u_2)dx\notag\\
    &\quad+\f 1{|\Omega|^2}\int_{\Omega}(u_1-u_2)dx\int_{\Omega}u_2(f_1-f_2)dx\eqqcolon J_1+J_2,
  \end{align}
  thus using \eqref{hp R1}, we compute by Young's inequality and H\"older's inequality
  \begin{align}\label{estimate J1 average u}
    J_1&\le \f {a_0}{8|\Omega|}\left(\int_{\Omega}(u_1-u_2)dx\right)^2+\f{2}{a_0|\Omega|^3}\left(\int_{\Omega}f_1(u_1-u_2)dx\right)^2\notag\\
       &\le \f{a_0}{8}\|u_1-u_2\|_{L^2(\Omega)}^2+C'(u_1-u_2)_{\Omega}^2,
  \end{align}
  and  similarly
  \begin{align}\label{estimate J2 average u}
    J_2&=\f 1{|\Omega|^2}\int_{\Omega}(u_1-u_2)dx\int_{\Omega}u_2\left(f(u_1,v_1)-f(u_1,v_2)+f(u_1,v_2)-f(u_2,v_2)\right)dx\notag\\
       &\le \f{a_0}{8}\|u_1-u_2\|_{L^2(\Omega)}^2+\f{d_v M}{8}\|v_1-v_2\|_{L^2(\Omega)}^2+C''(u_1-u_2)_{\Omega}^2.
  \end{align}
  By putting \eqref{estimate J1 average u}, \eqref{estimate J2 average u} into \eqref{def J1 J2}, we obtain
  \begin{align}\label{evolution average u}
    \f 12 \f d{dt}(u_1-u_2)_{\Omega}^2&\le \f{a_0}{4}\|u_1-u_2\|_{L^2(\Omega)}^2+\f{d_v M}{8}\|v_1-v_2\|_{L^2(\Omega)}^2 +C_3(u_1-u_2)_{\Omega}^2,
  \end{align}
  where the constant $C_3$ depends on $a_0, d_v, \eta_u, \eta_v, M, \Omega$ and on $\|u_i\|_{L^{\infty}(\Omega)}, \|v_i\|_{L^{\infty}(\Omega)}$, $i=1,2$. Similarly, it holds
  \begin{align}\label{evolution average v}
    \f 12 \f d{dt}(v_1-v_2)_{\Omega}^2 &\le \f{a_0}{8}\|u_1-u_2\|_{L^2(\Omega)}^2+\f{d_v M}{4}\|v_1-v_2\|_{L^2(\Omega)}^2+C_4(v_1-v_2)_{\Omega}^2,
  \end{align}
  where the constant $C_4$ depends on $a_0, d_v, \eta_u, \eta_v, M, \Omega$ and on $\|u_i\|_{L^{\infty}(\Omega)}, \|v_i\|_{L^{\infty}(\Omega)}$, $i=1,2$.

  Adding estimates (\ref{evolution phi+psi}), (\ref{evolution average
    u}), (\ref{evolution average v}), using Poincar\'e's inequality
  (remember that
  $ \int_{\Omega} \phi\, dx = \int_{\Omega} \psi\, dx =0$) and
  renaming the constants, we get
  \begin{align*}
    \frac 12 \frac{d}{dt} \int_{\Omega}
    &\left( |\nabla \phi|^2  + M |\nabla \psi|^2+ (u_1-u_2)^2_{\Omega} +(v_1-v_2)_{\Omega}^2\right)dx\\
    &\le -\f{a_0}8  \|u_1-u_2\|_{L^2(\Omega)}^2-\left(\f{d_vM}8-\f{2a_2^2}{a_0}\right)\|v_1-v_2\|_{L^2(\Omega)}^2\\
    &\quad+ C_M\int_{\Omega}\left( |\nabla \phi|^2  + M |\nabla \psi|^2+ (u_1-u_2)^2_{\Omega} +(v_1-v_2)_{\Omega}^2\right)dx.
  \end{align*}
  Thus, taking $M>0$ large enough, this finishes the proof by
  Gronwall's lemma.

  Finally, we remark how the dependency on the first solution can be
  weakened and explain it on the first term of \(I_{rea}\).  With
  \(p\) satisfying \(1/p > 1/2 - 1/N\) and the conjugate index
  \(p^*\), we can estimate the term as
  \begin{align*}
    \int_{\Omega}\phi f_1(u_1-u_2)dx
    &\le \| \phi \|_p \, \| f_1(u_1-u_2) \|_{p^*} \\
    &\le \| \phi \|_p \, \| f_1 \|_r \, \| u_1-u_2 \|_{2},
  \end{align*}
  where \(1/r = 1/p^* - 1/2\).  Then the stability constant only
  depends on \(\|u_1\|_r\) as by the choice of \(1/p > 1/2 - 1/N\) we
  can estimate for \(\alpha \in (0,1)\)
  \begin{equation*}
    \| \phi \|_p
    \lesssim \| \phi \|_2^\alpha \| \nabla \phi \|_2^{1-\alpha}
    + \| \phi \|_2
  \end{equation*}
  so that the Gronwall estimate can be closed.
\end{proof}

\appendix

\section{Some classical results for linear parabolic equation}\label{app-sect:esistence}

We state here the following existence result for a linear parabolic
equation in \textit{non-divergence} form, together with some standard estimates.

\begin{prop}\label{prop 1 eq in b}
  We consider the following linear parabolic problem defined on a smooth bounded domain $\Omega$ of $\R^N$:
  \begin{equation}\label{eq in b}
    \begin{cases}
      \partial_t b-\gamma(t,x)\Delta b= r(t,x)\,b,\qquad&\text{ in } \Omega_T,\\
      \nabla b\cdot\sigma=0,\qquad&\text{ on }(0,T)\times\pa\Omega,\\
      b(0,x)=b_{\init}(x)\ge0,\qquad&\text{ on }\Omega,
    \end{cases}
  \end{equation}
  where
  \begin{enumerate}[label={\roman*)}]
  \item $ \gamma:\Omega_T\rightarrow\R_+,$ $\gamma\,$ lies in
    $\,L^{\infty}([0,T]  ; W^{1,\infty}(\Omega))$
    and there exist two constants
    $\gamma_0,\gamma_1>0$ such that
    \begin{equation}\label{hp gamma}
      0<\gamma_0\le\gamma (t,x)\le\gamma_1,\quad \text{a.e. in }\Omega_T,
    \end{equation}
  \item $r:\Omega_T\rightarrow\R,$ $r$ lies in $L^2(\Omega_T)$ and $L^{1}([0,T]; L^{\infty}(\Omega))$,
  \item $b_{\init}:\Omega\rightarrow \R_+$ satisfies
    \begin{equation}\label{hp b init}
      b_{\init}\in \big(L^{\infty}\cap H^1\big)(\Omega).
    \end{equation}
  \end{enumerate}
  Then, there exists a nonnegative strong (in the sense of
  Theorem~\ref{thm existence (u,v) + regularity}) solution $b$
  to \eqref{prop 1 eq in b} such that
  \begin{enumerate}[label={\roman*)}]
  \item for all $t\in(0,T)$, b satisfies
    \begin{equation}\label{Linf bound b}
      \|b\|_{L^{\infty}(\Omega_T)}\le \|b_{\init}\|_{L^{\infty}(\Omega)}e^{\int_0^T \sup\limits_{x \in \Omega} r(t,x) \, dt},
    \end{equation}
  \item there exists a constant $C>0$ depending on $b_{\init}, \gamma_0, \gamma_1$, such that
    \begin{equation}\label{estimate b}
      \|\partial_t b\|_{L^2(\Omega_T)}^2+\|\nabla b\|^2_{L^{\infty}([0,T];L^2(\Omega))}+\|\Delta b\|_{L^2(\Omega_T)}^2
      \le C\left(1+\|rb\|_{L^2(\Omega_T)}^2\right).
    \end{equation}
  \end{enumerate}
\end{prop}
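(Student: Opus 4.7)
The plan is to combine classical linear parabolic theory for existence with two direct a priori estimates: a maximum-principle argument for \eqref{Linf bound b} and an energy estimate based on testing with $-\Delta b$ for \eqref{estimate b}. Because $\gamma$ and $r$ have only the regularity stated, I would first work on a smoothed problem obtained by mollifying $r$, $\gamma$ and $b_{\init}$ in such a way that the bounds \eqref{hp gamma} and \eqref{hp b init} are preserved, invoke classical results (either a Galerkin scheme in $H^1(\Omega)$ adapted to the Neumann condition or the Ladyzhenskaya-Solonnikov-Ural'tseva framework) to produce smooth strong solutions $b_n$, derive the two bounds uniformly in $n$, and then pass to the limit using weak/strong compactness together with the linearity of the equation. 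Uniqueness at this linear level comes from a standard $L^2$ energy estimate on the difference.

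Nonnegativity is obtained at each approximation level by testing the equation with $b_- \coloneqq \max(-b,0)$: the Neumann boundary term vanishes, the diffusion contributes $-\int_\Omega \gamma\,|\nabla b_-|^2\,dx \le 0$, and Gronwall's lemma applied with $\|r(t,\cdot)\|_{L^\infty(\Omega)}\in L^1_t$ propagates $b_-(0,\cdot)=0$ to all times.

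For \eqref{Linf bound b}, the natural idea is to compare $b$ with the spatially constant supersolution
\begin{equation*}
M(t) \coloneqq \|b_{\init}\|_{L^\infty(\Omega)} \exp\!\left(\int_0^t \sup_{x\in\Omega} r(s,x)\,ds\right),
\end{equation*}
which satisfies $\partial_t M - \gamma\Delta M - rM \ge 0$, $M(0) \ge b_{\init}$, and the Neumann condition trivially. Testing the equation satisfied by $b-M$ with $(b-M)_+$ and using Gronwall gives $b \le M$, i.e.\ exactly \eqref{Linf bound b}. The hypothesis $r\in L^1([0,T];L^\infty(\Omega))$ is precisely what is needed to make $M(t)$ finite.

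For \eqref{estimate b}, I would multiply \eqref{eq in b} by $-\Delta b$ and integrate over $\Omega$. Using $\nabla b\cdot\sigma=0$, one has the identity
\begin{equation*}
\int_\Omega \partial_t b \, (-\Delta b)\,dx = \frac{1}{2}\frac{d}{dt}\|\nabla b\|_{L^2(\Omega)}^2,
\end{equation*}
so that, exploiting $\gamma\ge\gamma_0$ and the Young-type splitting $|rb\,\Delta b|\le \tfrac{\gamma_0}{2}|\Delta b|^2 + \tfrac{1}{2\gamma_0}|rb|^2$, one gets
\begin{equation*}
\frac{d}{dt}\|\nabla b\|_{L^2(\Omega)}^2 + \gamma_0\|\Delta b\|_{L^2(\Omega)}^2 \le \frac{1}{\gamma_0}\|rb\|_{L^2(\Omega)}^2.
\end{equation*}
Integrating in time, together with $b_{\init}\in H^1(\Omega)$, yields the claimed $L^\infty_t L^2_x$ bound for $\nabla b$ and $L^2_{t,x}$ bound for $\Delta b$. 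The $L^2$ bound on $\partial_t b$ then follows directly from the equation via $\|\partial_t b\|_{L^2(\Omega_T)} \le \gamma_1\|\Delta b\|_{L^2(\Omega_T)} + \|rb\|_{L^2(\Omega_T)}$. The main subtlety is justifying all these integrations by parts; this is exactly why one works first at the mollified level where everything is smooth, collects the uniform bounds above, and then passes to the limit using weak lower semicontinuity of the $L^p$ norms and the linearity of the problem.
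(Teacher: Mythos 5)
Your proposal is correct and follows essentially the same route as the paper's (sketched) argument: regularise, invoke classical parabolic theory for the smooth problem, obtain the $L^\infty$ bound by a maximum-principle argument, derive \eqref{estimate b} with the multiplier $-\Delta b$, and pass to the limit. The only cosmetic difference is in the $L^\infty$ step, where you compare with the spatially constant supersolution $M(t)$ and test with $(b-M)_+$, while the paper applies the maximum principle to $b(t,x)\exp(-\int_0^t r(s,x)\,ds)$; both deliver \eqref{Linf bound b}.
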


\medskip

We briefly explain the strategy of proof of \textit{Proposition \ref{prop 1 eq
  in b}}: \medskip

We first
consider a regularized version of \eqref{eq in b}, which admits a
 unique classical solution, thanks to classical results of parabolic PDEs theory
\cite{LSU}.
\par
 Then, we can prove uniform (with respect to the regularizing parameter) a priori estimates
corresponding to \eqref{Linf bound b}, \eqref{estimate b}
and pass to the limit when that parameter tends to $0$.
\par
For \eqref{Linf bound b}, this is done by looking at the equation satisfied by $b(t,x)\, \exp( - \int_0^t r(s,x)\, ds)$ and by applying the maximum principle. For \eqref{estimate b}, the estimate is obtained thanks to the use of the multiplier $\Delta b$.

\bigskip

\noindent
\textbf{Acknowledgment}\\
\noindent
EB is a member of the INdAM-GNAMPA group.

\bigskip

%\bibliographystyle{abbrv}
%\bibliography{thesis_references.bib}
%\addcontentsline{tof}{chapter}{\bibname}
\bigskip

\noindent
Email addresses: \\
Elisabetta Brocchieri : elisabetta.brocchieri@uni-graz.at\\
Laurent Desvillettes : desvillettes@imj-prg.fr\\
Helge Dietert : helge.dietert@imj-prg.fr

\bigskip
\noindent ${^1}$
Departement of Mathematics and Scientific Computing, University of Graz, 8010 Graz, Austria.\\
\noindent ${^2}$
Université Paris Cité and Sorbonne Université, CNRS and IUF, IMJ-PRG, F-75006 Paris, France.\\
\noindent ${^3}$
Université Paris Cité and Sorbonne Université, CNRS, IMJ-PRG, F-75013 Paris, France.
\end{document}